\theoremstyle{plain}
\newtheorem{theorem}{Theorem}[section]
\newtheorem{proposition}[theorem]{Proposition}
\newtheorem{lemma}[theorem]{Lemma}
\newtheorem{corollary}[theorem]{Corollary}
\theoremstyle{definition}
\newtheorem{definition}[theorem]{Definition}
\newtheorem{remark}[theorem]{Remark}
\newtheorem{example}[theorem]{Example}
\theoremstyle{remark}
\newcommand{\R}{\mathbb{R}}
\newcommand{\cF}{\mathcal{F}}
\DeclareMathOperator{\BetaFun}{Beta}
\DeclareMathOperator{\Law}{Law}
\newcommand{\qforq}{\quad\mbox{for}\quad}
\newcommand{\qandq}{\quad\mbox{and}\quad}
\newcommand{\tF}{\tilde{F}}
\newcommand{\tR}{\tilde{R}}
\numberwithin{equation}{section}
\begin{document}

\title{\vspace{-2.5em}
  Reward Design in Risk-Taking Contests
\date{\today}
\author{Marcel Nutz\thanks{Departments of Statistics and Mathematics, Columbia University, New York, USA, \texttt{mnutz@columbia.edu}. Research supported by an Alfred P.\ Sloan Fellowship and NSF Grant DMS-1812661.}
\and
Yuchong Zhang\thanks{Department of Statistical Sciences, University of Toronto,
Canada, \texttt{yuchong.zhang@utoronto.ca}. Research supported by NSERC Discovery Grant RGPIN-2020-06290.}
}
}
\maketitle \vspace{-1em}

\begin{abstract}
  Following the risk-taking model of Seel and Strack, $n$ players decide when to stop privately observed Brownian motions with drift and absorption at zero. They are then ranked according to their level of stopping and paid a rank-dependent reward. We study the problem of a principal who aims to induce a desirable equilibrium performance of the players by choosing how much reward is attributed to each rank. Specifically, we determine optimal reward schemes for principals interested in the average performance and the performance at a given rank. While the former can be related to reward inequality in the Lorenz sense, the latter can have a surprising shape.
\end{abstract}

\vspace{0.9em}

{\small
\noindent \emph{Keywords} Stochastic Contest; Stackelberg Game; Optimal Stopping

\noindent \emph{AMS 2020 Subject Classification}
91A65; 
91A15; 
91A55 


\section{Introduction}

We consider the Seel--Strack model~\cite{SeelStrack.13} of risk-taking under private information and relative performance pay: $n$ players decide when to stop privately observed, i.i.d.\ Brownian motions with drift. As the processes are absorbed at the origin, players risk bankruptcy by gambling longer, and this risk represents a cost for stopping later. Once all players have stopped, they are rewarded according to their relative ranks. 
Seel and Strack focus on a winner-takes-all game, meaning that only the top-ranked player receives a reward and the players' problem boils down to maximizing the probability of winning. Here, we consider arbitrary reward schemes where subsequent ranks may also receive payments. For instance, a hedge fund may compensate managers according to their rank, giving smaller bonuses also to the second and third-best performers, or even to all managers. Or, a firm may decide on promotions and terminations based on relative performance.
 The game admits a unique Nash equilibrium for any reward scheme.

A main result of Seel and Strack was that their contest is an inappropriate compensation scheme for firms because even a small negative drift can lead to large losses in the performance of an average manager---as the players care only about their relative ranking and not the absolute level of stopping, the winner-takes-all design induces risk-seeking behavior and the associated  extended gambling implies that the drift takes a significant toll on the average performance.
This observation is a motivation for our investigation: how should a principal allocate a given reward budget over the ranks in order to incentivize a desirable performance (stopping level) by the agents in equilibrium? This Stackelberg game is studied for several objective functions.  %
Mathematically, reward inequality in the sense of Lorenz order leads to a single-crossing property of the stopping distributions which drives several of our results.

First, we show that a principal deriving utility from the performance of the average player can use the reward design to align agents' risk preferences with her own, under suitable market conditions. Under negative drift, a risk-averse principal benefits from a more equal compensation scheme. Indeed, this alleviates the issue raised in~\cite{SeelStrack.13}: as players are less incentivized to gamble and stop sooner, their performance suffers less from the declining market. While as in~\cite{SeelStrack.13}, the largest losses still occur for small negative values of the drift, their magnitude is greatly reduced. Under positive drift, there is a trade-off between risk aversion and benefit from mean return, which results in an ambiguous comparison.

Second, we study a principal maximizing the expected performance of the first-ranked player. For instance, a firm launching a competition for a novel product design or architecture project may be interested in the winning submission (that will be realized later on) rather than the average.
The performance of the first-ranked player is shown to be monotone in Lorenz order for any market condition, and as a result, the winner-takes-all scheme is always optimal. Intuitively, this principal reaps outsized benefits from higher variance in the performance distribution which outweigh possible losses from a negative drift over time.

Third, we consider a principal maximizing the expected performance at the $k$-th rank, where $1< k\leq n-1$. As an example, consider a platform linking buyers and sellers
in sealed-bid, second-price auctions (as common e.g.\ in online advertising). If the platform receives a percentage of the price paid (i.e., the second-highest bid) and develops a reward program for bidders based on ranks, how should a given budget be distributed?
 A first guess may be to give equal rewards to the first two ranks. More generally we may consider the cut-off scheme at rank $j$, which allocates equal rewards to the first $j$ ranks and nothing to the rest---for instance, a company distinguishing franchises with a top-ten award or promoting its five best-performing employees (or terminating the worst-performing employees, as relevant to the fund industry~\cite{KempfEtAl.09}). The performance at the $k$-th rank turns out to be more subtle than the first rank. Indeed, the benefits from variance decline as $k$ increases, and other effects come to play. Under zero drift, a cutoff at rank 2 is optimal for the second-rank performance, but this result does not extend to larger $k$: while a cut-off is still optimal, it can be preferable to attribute rewards beyond the $k$-th rank. For example, when $n=10$, the performance of the median player ($k=5$) is optimized by paying equal rewards to the first 7 ranks. For positive drift, cutoff schemes are again optimal, whereas for negative drift, the optimal scheme may also pay an intermediate amount.
 
The winner-takes-all contest of~\cite{SeelStrack.13} has been extended in several directions, including more general diffusion processes~\cite{FengHobson.15}, random initial laws~\cite{FengHobson.16a}, heterogeneous loss constraints~\cite{Seel.15} and a behavioral model~\cite{FengHobson.16b} where losers may be penalized if they (a posteriori) missed an opportunity to win. A different model~\cite{SeelStrack.16} has no bankruptcy condition but instead postulates a flow cost that is charged until stopping. 
Rank-order prize allocations have been studied extensively for static games; see \cite[Chapter~3]{Vojnovic.2016} for an introduction and related literature. 
In the game of~\cite{FangNoe.16}, players independently choose any distribution on~$\R_{+}$ subject to an upper bound on the mean and receive rank-based rewards according to their realization. The authors establish existence and uniqueness of an equilibrium and show, among other comparative statics, that reward inequality leads to greater dispersion of the equilibrium distribution in the sense of convex order.
In a different but related model with convex effort costs~\cite{FangNoeStrack.20}, reward inequality is shown to decrease efforts. The authors discuss the implications of this ``discouragement effect'' in numerous areas such as managerial compensation, employee promotion, grading and admissions in higher education. Many of their conclusions are also relevant to the present paper.

Via Skorokhod's embedding theorem, the game of~\cite{FangNoe.16} is equivalent to the present timing game in the case of driftless Brownian motion. When the drift is nonzero, a monotone transformation can be used to identify equilibria with the driftless case. As rewards only depend on ranks and ranks are preserved by the transformation, this immediately implies the existence and uniqueness of an equilibrium. On the other hand, comparative statics that are not invariant under monotone transformations may differ---for instance, the aforementioned result on dispersion does not hold for positive drift (Example~\ref{ex:principal-bull-market}). The main difference with the present study, however, is our focus on a principal designing the reward. To the best of our knowledge, performance at a given rank has not been studied in these games.

Related but different rank-based games have been studied in \cite{BayCviZhang.19,BayZhang.19, NutzZhang.19}. In a dynamic Poissonian game where players control the jump intensity and are ranked according to their jump times, \cite{NutzZhang.19} shows that the expected jump time of the $k$-th ranked player is minimized by a reward scheme which pays nothing to the ranks below~$k$. The amounts paid to ranks $1, \ldots, k$ are positive and strictly concave; in particular, unlike in the present model, they are not equal.  In the mean field game limit with an infinite number of competing players, the effect of reward inequality and several contest design problems are analyzed in \cite{BayZhang.19} and \cite{BayCviZhang.19}. Here players exert effort to maximize rewards based on the ranking of their terminal position and completion time of drifted Brownian motions, respectively, but analytical results are not available for the associated finite-player games.

Following this Introduction, Section~\ref{se:equilibrium} details the model and the equilibrium for a given reward scheme, whereas Section~\ref{se:design} studies the optimal reward design.

\section{Equilibrium}\label{se:equilibrium}

We fix the number $n\geq 2$ of players. For $1\leq i\leq n$, consider a diffusion $X^{i}_{t}=x_{0}+\mu t + \sigma W^{i}_{t}$ with absorption at $x=0$. The parameters $x_{0},\sigma\in(0,\infty)$ and $\mu\in\R$ are common among all players whereas the standard Brownian motions $W^{i}$ are independent. Each player~$i$ observes only her own diffusion and chooses a possibly randomized stopping time $\tau_{i}<\infty$. The players are then ranked according to the level $X^{i}_{\tau_{i}}$ at which they stopped, with ties split uniformly at random. The player with rank~$k$ is given a reward $R_{k}$. These prizes are deterministic and ordered, $R_{1}\geq R_{2} \geq \dots \geq R_{n}\geq0$,  with $R_{1}>R_{n}$ to exclude the constant case where any profile of stopping times is an equilibrium.\footnote{Ordered prizes are natural in the applications we have in mind, like employee compensation or auctions, where a different scheme may not be acceptable to players in the first place. We mention that non-monotone rewards can lead to non-existence of 
equilibrium stopping times or atoms in the equilibrium distribution, issues that we prefer to avoid here.} We denote the total reward by $R_{tot}:=\sum_{k=1}^{n} R_{k}$ and the average reward by $\bar R:=R_{tot}/n$.

A given stopping time $\tau_{i}$ leads to a distribution $F=\Law(X^{i}_{\tau_{i}})$ for the position at stopping. The set $\cF$ of distributions that are \emph{feasible} in this sense is readily characterized through Skorokhod's embedding theorem, as observed in~\cite{SeelStrack.13}.

\begin{lemma}\label{le:attainableDistrib}
  The set $\cF$ consists of all distributions $F$ supported on $[0,\infty)$ satisfying 
  $\int_{\R} h(x) F(dx)=1$ if $\mu> 0$ and $\int_{\R} h(x) F(dx)\le 1$ if $\mu\le 0$, respectively, where $h$ is the normalized scale function
\begin{equation}\label{eq:defh}
  h(x)=\begin{cases}
  \frac{\exp(\frac{-2\mu x}{\sigma^{2}})-1}{\exp(\frac{-2\mu x_{0}}{\sigma^{2}})-1}, & \mu\neq 0,\\
  \frac{x}{x_0}, & \mu=0.
  \end{cases}
\end{equation}
\end{lemma}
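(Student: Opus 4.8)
The plan is to pass to the natural scale of the diffusion, where the absorbed process becomes a nonnegative local martingale, and then to extract the constraint on $F$ from the martingale property (for necessity) and from Skorokhod's embedding theorem (for sufficiency). The first step is to identify $h$ as the scale function of $X$: with generator $\cL=\tfrac{\sigma^{2}}{2}\partial_{xx}+\mu\partial_{x}$, a direct differentiation of \eqref{eq:defh} gives $\cL h=0$, and the normalization is $h(0)=0$, $h(x_{0})=1$. Hence, by It\^o's formula, $M_{t}:=h(X_{t})$ is a continuous local martingale with $M_{0}=1$ that is absorbed at $0=h(0)$ exactly when $X$ is. The map $h$ is a strictly increasing bijection of $[0,\infty)$ onto $[0,h(\infty))$, and its range is governed by the sign of the drift: $h(\infty)<\infty$ when $\mu>0$ (so $M$ is bounded), whereas $h(\infty)=\infty$ when $\mu\le 0$ (so $M$ is merely nonnegative). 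Since $X\ge 0$, any feasible $F$ is automatically supported on $[0,\infty)$.

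For necessity, fix a feasible $F=\Law(X_{\tau})$ with $\tau<\infty$, so that $\int h\,dF=\E[h(X_{\tau})]=\E[M_{\tau}]$. If $\mu>0$, then $M$ is a bounded local martingale, hence a uniformly integrable martingale, and dominated convergence along $M_{\tau\wedge t}\to M_{\tau}$ gives $\E[M_{\tau}]=M_{0}=1$, the claimed equality. If $\mu\le 0$, then $M\ge 0$ is a nonnegative local martingale, hence a supermartingale, and Fatou's lemma along $M_{\tau\wedge t}\to M_{\tau}$ gives $\E[M_{\tau}]\le M_{0}=1$, the claimed inequality.

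For sufficiency I would run the argument backwards: given $F$ on $[0,\infty)$ meeting the constraint, I must produce $\tau<\infty$ with $\Law(X_{\tau})=F$. Because $h$ is a bijection, it is equivalent to embed the pushforward $\nu:=h_{\ast}F$, a law on $[0,h(\infty))$ whose barycenter is $\int y\,\nu(dy)=\int h\,dF$, into the natural-scale process. By the Dambis--Dubins--Schwarz theorem the latter is a time-changed Brownian motion $B$ started at $1$ and absorbed at $0$; embedding $\nu$ into $B$ by an a.s.-finite stopping time and then undoing the time change and applying $h^{-1}$ yields the desired $\tau$. The barycenter condition---equal to $1$ when $\mu>0$ and at most $1$ when $\mu\le 0$---is precisely the solvability criterion for this embedding.

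I expect the embedding to be the main obstacle, and it separates into two regimes. When the barycenter equals $1$, the target has the same mean as the starting point and a classical mean-preserving embedding applies; choosing a Chacon--Walsh construction through exit times of intervals $[\ell,r]\subseteq[0,\infty)$ (or an Az\'ema--Yor embedding) keeps $B$ in $[0,\infty)$ and is thus compatible with absorption at $0$, while yielding a uniformly integrable, a.s.-finite $\tau$. When the barycenter is strictly below $1$---which can occur only for $\mu\le 0$, where $B$ reaches $0$ almost surely---no uniformly integrable embedding exists, so the stopping rule must shed the mean deficit, whether through an atom sent to the absorbing barrier or through large upward excursions before stopping, all while keeping $\tau$ finite and the path in $[0,\infty)$. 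Reconciling these requirements with the prescribed support of $\nu$ is the delicate point; I would handle it by a potential-theoretic (Chacon--Walsh) construction on the half-line with absorption, whose stopping time is finite almost surely precisely because $\mu\le 0$ forces $B$ to hit $0$ in finite time.
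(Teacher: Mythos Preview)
The paper does not actually prove this lemma: it simply cites Hall (1969) and Obl\'oj's survey on the Skorokhod embedding problem and remarks that ``the extension to the present case with absorbing boundary is immediate.'' Your approach---pass to natural scale so that $M=h(X)$ is a nonnegative local martingale, extract the constraint via optional stopping (bounded case) or Fatou (nonnegative case) for necessity, and build the converse from a Skorokhod-type embedding after a DDS time change---is precisely the standard argument behind those references, so you are aligned with what the paper implicitly relies on, and you in fact supply considerably more detail than the paper does.

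One refinement worth noting on the sufficiency side for $\mu>0$. You describe the natural-scale process as ``a time-changed Brownian motion $B$ started at $1$ and absorbed at $0$,'' but since $M$ is bounded above by $h(\infty)<\infty$, the DDS Brownian motion is only available up to its first exit from $(0,h(\infty))$: on the event of non-absorption, $X_t\to\infty$, $M_t\to h(\infty)$, and $\langle M\rangle_\infty$ is the hitting time of $h(\infty)$ by $B$. The embedding must therefore stop strictly before $B$ reaches $h(\infty)$ in order to pull back to a \emph{finite} stopping time $\tau$ for $X$. This is not a genuine obstacle---$\nu=h_{*}F$ has mean $1$ and is supported on $[0,h(\infty))$, and an Az\'ema--Yor or Chacon--Walsh embedding of such a $\nu$ keeps $B$ inside $[0,h(\infty))$ up to the stopping time---but it is a detail that deserves a sentence, since you flag only the $\mu\le 0$ regime as delicate.
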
 
This result goes back to \cite{Hall.69}; see~\cite[Section~9]{Obloj.04} for a systematic derivation and background. (The extension to the present case with absorbing boundary is immediate.)
We say that $F\in\cF$ is an \emph{equilibrium} distribution if, for i.i.d.\ stopping levels $X^{i}_{\tau_{i}}\sim F$, no player is incentivized to choose a different stopping time (or equivalently, a different distribution in~$\cF$).
Mathematically, let $u^F(x)$ be the expected payoff of player 1 (say) for stopping at level $x$ if all other players stop according to $F$. The probability that among players $2, \ldots, n$, there are exactly $i$ players stopping above $x$, $j$ players below $x$, and $k$ players at $x$, is given by
\[{{n-1}\choose{i,j,k}}(1-F(x))^i F(x-)^j (F(x)-F(x-))^k.\]
Here and below, we use the same symbol~$F$ to denote the measure and its cdf, and $F(x-):=\lim_{y\uparrow x}F(y)$.
Such a configuration leads to an average payoff 
$(R_{i+1}+\cdots+ R_{i+k+1})/(k+1)$
for player~$1$ as ties are broken randomly, and it follows that
\begin{equation}\label{eq:uF}
u^F(x)=\sum_{\substack{i,j,k\ge 0 \\ i+j+k= n-1}} \frac{R_{i+1}+\cdots+ R_{n-j}}{k+1} {{n-1}\choose{i,j,k}}(1-F(x))^i F(x-)^j (F(x)-F(x-))^k.
\end{equation}
Then $F\in \cF$ is an equilibrium if $\int u^F dF \ge \int u^F d\tilde F$ for all $\tilde F\in \cF$.

The equilibrium can be motivated through an ansatz as follows. We guess  that there is an equilibrium $F$ with no atoms and support $[0,\bar x]$ for some $0<\bar x<\infty$.
For $0\leq x\leq \bar x$, let $u(x)=u^F(x)$ be the expected payoff defined above.
As $F$ is atomless, $x=\bar x$ leads to the first rank with probability one, hence $u(\bar x)=R_{1}$. Similarly, $u(0)=R_{n}$, and
symmetry suggests that $  u(x_{0})=\bar R$. More generally, we guess that in equilibrium, player~1 is invariant between all stopping times $0\leq \tau \leq \bar \tau$, where $\bar \tau$ is the first exit time from $[0,\bar x]$. This translates to the condition that $u(X)$ is a martingale as long as $X$ stays within $(0,\bar x)$. 
If $u$ is smooth on $(0,\bar x)$, it follows via It\^{o}'s formula that
$\mu u'(x) + \frac12 \sigma^{2} u''(x)=0$ on that interval.
For $\mu\neq 0$, the unique function satisfying all these conditions is
\begin{equation}\label{eq:valueFun}
  u(x)= (\bar R - R_{n}) \frac{\exp(\frac{-2\mu x}{\sigma^{2}})-1}{\exp(\frac{-2\mu x_{0}}{\sigma^{2}})-1} + R_{n}, \quad 0\leq x\leq \bar x,
\end{equation}
where $\bar x$ is determined via $u(\bar x)=R_{1}$ to be 
\begin{equation}\label{eq:xbar}
  \bar x = \frac{\sigma^{2}}{-2\mu}\log\left\{\frac{R_{1}-R_{n}}{\bar R - R_{n}}\left[\exp\Big(\frac{-2\mu x_{0}}{\sigma^{2}}\Big)-1\right] + 1\right\}.
\end{equation}
More precisely, this expression is finite (and strictly positive) when $\mu<\bar\mu$, where $\bar\mu>0$ is defined by setting the argument of the above logarithm to zero,
\begin{equation}\label{eq:mubar}
  \bar\mu = \frac{\sigma^{2}}{2x_{0}} \log\left( \frac{R_{1} - R_{n}}{R_{1} -\bar R}\right).
\end{equation}
The restriction $\mu<\bar\mu$ is a \emph{standing assumption}. It ensures that players stop in finite time; in particular, the ranking is well-defined. In the driftless case $\mu=0$, the above simplifies to
\begin{equation}\label{eq:valuefunBarxMuNull}
  u(x)=\frac{\bar R - R_{n}}{x_{0}} x + R_{n}, \quad \bar x = \frac{R_{1} - R_{n}}{\bar R - R_{n}}x_{0}.
\end{equation}
On the other hand, since $F$ is atomless, \eqref{eq:uF} simplifies to 
\[u(x)=\sum_{k=1}^{n} R_{k}{{n-1}\choose{k-1}} F(x)^{n-k}(1-F(x))^{k-1}.\]
This right-hand side is of the form $g(F(x))$, and the following allows us to define~$F$ by inverting~$g$.

\begin{lemma}\label{le:solFexists}
  The function 
  $$
    g: [0,1]\to [R_{n},R_{1}], \quad g(y)=\sum_{k=1}^{n} R_{k} {{n-1}\choose{k-1}} y^{n-k}(1-y)^{k-1}
  $$
  is strictly increasing, hence invertible on $[R_{n},R_{1}]=[u(0),u(\bar x)]$. Define
  \begin{equation}\label{eq:defF}
  F(x)= g^{-1} (u(x)), \quad 0\leq x\leq \bar x
  \end{equation}
  as well as $F(x)=0$ for $x<0$ and $F(x)=1$ for $x>\bar x$. Then $F$ is the cdf of an atomless distribution with support $[0,\bar x]$ whose density~$f$ is 
  strictly positive on $(0,\bar x)$. Moreover, $F\in\cF$.
\end{lemma}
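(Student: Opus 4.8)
The plan is to establish the three assertions in turn: strict monotonicity of $g$, the distributional properties of $F$, and feasibility $F\in\cF$.

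For the monotonicity of $g$, I would first re-index the sum by the number of competitors ranked below, writing $g(y)=\sum_{j=0}^{n-1}R_{n-j}\binom{n-1}{j}y^{j}(1-y)^{n-1-j}$, which exhibits $g$ as a Bernstein-polynomial expansion of degree $n-1$ with coefficients $R_{n-j}$. The standard differentiation formula for Bernstein polynomials then gives $g'(y)=(n-1)\sum_{j=0}^{n-2}(R_{n-1-j}-R_{n-j})\binom{n-2}{j}y^{j}(1-y)^{n-2-j}$. Since the rewards are ordered, each coefficient $R_{n-1-j}-R_{n-j}$ is nonnegative, and their sum telescopes to $R_{1}-R_{n}>0$; as every Bernstein basis function is strictly positive on $(0,1)$, this forces $g'>0$ there. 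Continuity on $[0,1]$ then upgrades this to strict monotonicity, and evaluating at the endpoints yields $g(0)=R_{n}$, $g(1)=R_{1}$, so $g$ is a strictly increasing bijection onto $[R_{n},R_{1}]$ with a smooth inverse by the inverse function theorem (using $g'>0$ on the interior).

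For the properties of $F$, the key observation is that $u$ is itself a strictly increasing continuous bijection from $[0,\bar x]$ onto $[R_{n},R_{1}]$: from \eqref{eq:valueFun} (resp.\ \eqref{eq:valuefunBarxMuNull}) one reads off $u(x)=(\bar R-R_{n})h(x)+R_{n}$, where $h$ is strictly increasing with $h(0)=0$, $\bar R-R_{n}>0$, and $u(\bar x)=R_{1}$ by the definition \eqref{eq:xbar} of $\bar x$. Composing, $F=g^{-1}\circ u$ is a strictly increasing continuous bijection from $[0,\bar x]$ onto $[0,1]$ with $F(0)=0$ and $F(\bar x)=1$, so the extension by $0$ and $1$ outside $[0,\bar x]$ is a continuous nondecreasing cdf, hence atomless, with support exactly $[0,\bar x]$. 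Differentiability on $(0,\bar x)$ follows from the chain rule, $f(x)=(g^{-1})'(u(x))\,u'(x)=u'(x)/g'(F(x))$, and since $u'>0$ and $g'(F(x))>0$ whenever $F(x)\in(0,1)$, the density is strictly positive there.

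Feasibility $F\in\cF$ is where the one genuine computation lies, and I expect it to be the main point to get right. By Lemma~\ref{le:attainableDistrib} it suffices to check $\int h\,dF\le 1$, with equality required when $\mu>0$. Writing $h(x)=(u(x)-R_{n})/(\bar R-R_{n})$ and $u=g\circ F$, the change of variables $y=F(x)$ (justified by the strict monotonicity and continuity of $F$ from the previous step) turns $\int u\,dF$ into $\int_{0}^{1}g(y)\,dy$. The Beta-integral identity $\binom{n-1}{j}\int_{0}^{1}y^{j}(1-y)^{n-1-j}\,dy=1/n$ makes every Bernstein term contribute equally, whence $\int_{0}^{1}g(y)\,dy=\tfrac1n\sum_{k=1}^{n}R_{k}=\bar R$. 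Therefore $\int h\,dF=(\bar R-R_{n})/(\bar R-R_{n})=1$, which satisfies the constraint in every regime; the driftless case is identical with $h(x)=x/x_{0}$. None of this is deep, but the substantive content is this last identity, and the only thing to watch is bookkeeping in the re-indexing and ensuring the change of variables is legitimate.
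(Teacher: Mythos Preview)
Your proof is correct and more detailed than the paper's own argument, which is essentially a two-line sketch. The one genuine methodological difference concerns the monotonicity of~$g$: the paper argues probabilistically, observing that $g(y)$ is the expected reward for a player who stops at~$y$ when the other $n-1$ players are i.i.d.\ uniform on~$[0,1]$, and a higher~$y$ stochastically improves the rank, hence the expected reward. You instead re-index to exhibit $g$ as a Bernstein polynomial and differentiate explicitly. Both arguments are short; the paper's interpretation is conceptually illuminating (it explains \emph{why} $g$ has this form in the first place), while your Bernstein computation is fully self-contained and also delivers the explicit formula for $g'$, which makes the density calculation $f=u'/g'(F)$ immediate. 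For the feasibility step $\int h\,dF=1$, the paper merely says ``direct computation''; your change of variables $y=F(x)$ together with the Beta identity $\binom{n-1}{j}\int_0^1 y^j(1-y)^{n-1-j}\,dy=1/n$ is exactly the computation the paper has in mind, and your bookkeeping is correct.
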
 

The stated properties of~$g$ follow from the observation that $g(y)$ is the expected reward for stopping at $y$ in the game where the other $n-1$ players stop according to a uniform distribution on $[0,1]$. A direct computation shows $\int h dF=1$, so that $F\in\cF$ is guaranteed by Lemma~\ref{le:attainableDistrib}.

The construction implies that $F$ is indeed an equilibrium: If players $2,\dots,n$ have stopping distribution $F$, then $u$ is the value function for player~1; in particular, player~1 can attain an expected reward of $\bar R$ by choosing~$F$ as well. If $\tau$ is any stopping time (possibly randomized), It\^{o}'s formula and the fact that $X:=X^{1}$ is absorbed at $0$ imply that $u(X_{t})$ is a nonnegative supermartingale and in particular $E[u(X_{\tau})]\leq u(x_{0})=\bar R$. Hence, player~1 has no incentive to deviate from~$F$, showing that $F$ is an equilibrium.

\begin{proposition}\label{pr:equilibrium}
 Let $u,\bar x,\bar\mu,F$ be defined as in \eqref{eq:valueFun}--\eqref{eq:defF} and $\mu<\bar\mu$. There exists a unique equilibrium, given by the distribution~$F$, and $u$ is the corresponding equilibrium value function.
\end{proposition}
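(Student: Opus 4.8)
The existence part is already contained in the construction preceding the statement: with $F$ as in \eqref{eq:defF}, the payoff for stopping at level $x$ when the others play $F$ equals the $h$-affine function $u$ of \eqref{eq:valueFun}, and $u(X_t)$ is a nonnegative supermartingale that is a true martingale up to $\bar\tau$; hence no deviation beats $u(x_0)=\bar R$, so $F$ is an equilibrium and $u$ is its value function. The plan is therefore to prove \emph{uniqueness}. Let $\tilde F\in\cF$ be any equilibrium, write $\psi:=u^{\tilde F}$ for the induced payoff from \eqref{eq:uF}, and record two facts I will use throughout: $\psi$ is nondecreasing (a higher stopping level yields a weakly better rank, and rewards are ordered), and, since the profile is symmetric and the full budget $R_{tot}$ is always awarded, each player's expected reward is $R_{tot}/n=\bar R$, so the equilibrium value is $V=\int\psi\,d\tilde F=\bar R$.

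The engine is a passage to natural scale. Since $h$ from \eqref{eq:defh} solves $\mu h'+\tfrac12\sigma^2h''=0$ with $h(0)=0$, the process $Y_t:=h(X_t)$ is a nonnegative local martingale started at $Y_0=h(x_0)=1$; and since $\tilde F\in\cF$, Lemma~\ref{le:attainableDistrib} gives $m:=\int h\,d\tilde F\le1$. Writing $y=h(x)$, set $\phi:=\psi\circ h^{-1}$ and let $G$ be the law of $\tilde F$ in this coordinate, so $\phi$ is nondecreasing and $G$ has mean $m\le1$. Because $\tilde F$ is a best response to itself, the best-response value equals the least concave majorant $W$ of $\phi$ evaluated at $Y_0=1$, i.e.\ $W(1)=V$, and $W$ is nondecreasing (being the least concave majorant of a bounded nondecreasing function). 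I would then run the Jensen chain
\[
V=\int\phi\,dG\le\int W\,dG\le W(m)\le W(1)=V,
\]
using $\phi\le W$, concavity of $W$, monotonicity of $W$ together with $m\le1$, and $W(1)=V$. Every inequality is therefore an equality, which forces $\phi=W$ $G$-a.e.\ and $W$ affine on $\conv(\supp G)$. Translating back, $\psi=u^{\tilde F}$ is $h$-affine on $\conv(\supp\tilde F)$: there $u^{\tilde F}(x)=\alpha h(x)+\beta$ for some constants $\alpha,\beta$.

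It remains to identify the constants and invert. Granting that $\tilde F$ is atomless with $\inf\supp\tilde F=0$ and bounded support $[0,\bar x']$, the reduction of \eqref{eq:uF} to $u^{\tilde F}(x)=g(\tilde F(x))$ gives the boundary values $u^{\tilde F}(0)=g(0)=R_n$ and $u^{\tilde F}(\bar x')=g(1)=R_1$. The first yields $\beta=R_n$; evaluating the affine form at $x_0$ (equivalently $W$ at $Y_0=1$) gives $V=\alpha+R_n$, so $\alpha=\bar R-R_n$; and the second determines $\bar x'$ through $\alpha h(\bar x')+R_n=R_1$, which is exactly \eqref{eq:xbar}. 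Hence $u^{\tilde F}=u$. Finally, since $g$ is strictly increasing (Lemma~\ref{le:solFexists}), the identity $u^{\tilde F}(x)=g(\tilde F(x))$ inverts to $\tilde F=g^{-1}(u)=F$, the distribution of \eqref{eq:defF}.

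The main obstacle is the structural claim just granted: that every equilibrium is atomless with support a single interval anchored at $0$. I expect this to carry the real weight. Atoms should be excluded using $R_1>R_n$ together with the affineness already established: an atom in the interior of the support would make $u^{\tilde F}$ jump, since the tie-broken value in \eqref{eq:uF} lies strictly between the one-sided limits $g(\tilde F(x-))$ and $g(\tilde F(x))$, contradicting continuity of the affine $\psi$; atoms at the extreme levels would be ruled out by comparing the stopping level with a slight upward shift, using reward monotonicity and the feasibility constraint $\int h\,d\tilde F\le1$. Connectedness of the support should follow because on any gap the player strictly prefers to continue, so $W>\phi$ there, which is incompatible with $\phi=W$ being affine on the support on both sides of the gap; the support then collapses to one interval, which must reach down to $0$ because $\phi(0)=R_n=\min\phi$ already sits on the concave majorant. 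Once this structural step is in place, the Jensen computation and the inversion above finish the proof.
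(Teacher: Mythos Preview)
Your approach is genuinely different from the paper's. The paper does not argue uniqueness directly: it observes that for $\mu=0$ the game coincides with the capacity-constrained model of Fang--Noe, invokes their existence/uniqueness theorem, and for $\mu\neq0$ transports equilibria back and forth via $F\mapsto F\circ h^{-1}$. Your route---pass to natural scale, use the least concave majorant $W$ of $\phi=u^{\tilde F}\circ h^{-1}$, and run the Jensen chain---is exactly how one would reprove the Fang--Noe theorem from scratch. The advantage of your approach is self-containment; the paper's advantage is brevity.

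The Jensen chain and the identification of $\alpha,\beta,\bar x'$ are fine (modulo one bookkeeping slip: for $\mu>0$ Lemma~\ref{le:attainableDistrib} gives $m=1$, not $m\le1$, which actually simplifies that step). The real issue is the paragraph you flag as ``the main obstacle.'' Your sketches for the structural claims are not yet proofs, and some are circular as written. The atom-exclusion argument appeals to continuity of ``the affine $\psi$,'' but the Jensen chain only yields $\phi=W$ $G$-a.e., not pointwise on $\supp G$; at an atom $y^\ast$ you get $\phi(y^\ast)=W(y^\ast)$, while on either side you only know $\phi=W$ on a full-measure set, so you still have to produce nearby points of $\supp G$ and pass to one-sided limits---which in turn presupposes connectedness near $y^\ast$. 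Your connectedness sketch (``on a gap $W>\phi$, incompatible with affineness on the two sides'') does not close either: on a gap $(a,b)$ with no atoms, $\tilde F$ is constant, hence $\phi$ is constant there and equal to $W(a)=W(b)$; the contradiction only emerges after you push this to ``$W$ constant on $[a,\infty)$'' via concavity and then confront it with $\phi\to R_1$ at the top of the support. Similarly, ``$\phi(0)=R_n$ sits on the concave majorant'' does not by itself force $0\in\supp G$. These steps can all be made rigorous, but they are interlocked and require a careful order (e.g.\ first show $m=1$ and boundedness of $\supp G$, then connectedness, then exclude atoms, then anchor at $0$); as written the sketch does not deliver them.
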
 

\begin{proof}
  In the case $\mu=0$, Lemma~\ref{le:attainableDistrib} shows that the game is equivalent to the static, capacity-constrained game of~\cite{FangNoe.16}, where players choose among all distributions~$F$ on~$\R_{+}$ with $\int x\,dF\leq x_{0}$. Existence and uniqueness is established in~\cite[Theorem~1]{FangNoe.16}. If $\mu\neq0$, using the fact that the reward is based solely on the rank as well as $\mu< \bar\mu$, we see that $F$ is an equilibrium if and only if $\tF:=F\circ h^{-1}$ is an equilibrium of the game with $\mu=0$, and the proposition follows.
\end{proof} 

\begin{remark}\label{rk:valuesUsed}
(a) The value function $u$ depends on the minimal, maximal, and average reward, but not on the further details of the reward vector~$R$. By contrast, the equilibrium distribution depends on all rewards $R_{k}$. More precisely, there are $n-2$ degrees of freedom in~$R$ that can affect~$F$. Indeed, we could have assumed $R_{n}=0$ without loss of generality: subtracting a constant $c$ from all the $R_{k}$ will change $u$ into $u-c$ and $g$ into $g-c$ whereas the equilibrium distribution~$F$ is unchanged. Moreover, one can normalize the average (or the total) reward: replacing $R$ by $\lambda R$ for $\lambda>0$ changes $u$ into $\lambda u$ but leaves $F$ invariant.

(b) We have assumed that agents are risk-neutral wrt.\ the reward. This entails no loss of generality:
if agents optimize a utility function $U$ of the reward, we can treat $\tilde{R}_{k}:=U(R_k)$ as an auxiliary reward and agents as risk-neutral wrt.~$\tilde{R}$.

(c) As $F$ is atomless, ties and bankruptcies almost-surely do not occur.
\end{remark}

\section{Reward Design}\label{se:design}

We now study how the reward scheme influences the equilibrium stopping distribution and thus the players' levels of stopping, also called their \emph{performance} in what follows. While players only care about their rank, a principal interested in the performance of one or more players may optimize the reward scheme such as to induce a desirable performance. As above, rewards are fixed at the initial time and depend only on the final ranking.
Throughout, we normalize $R_{n}=0$ and vary $R_{1},\dots,R_{n-1}$ while keeping the total reward $\sum_{i=1}^n R_i=1$ constant; cf.\ Remark~\ref{rk:valuesUsed}\,(a). The standing assumption $\mu< \bar\mu$, cf.~\eqref{eq:mubar}, is in force for all reward schemes under discussion. This assumption is most stringent for the winner-takes-all scheme  ($R_{i}=0$ for $i>1$), where it reads 
\begin{equation}\label{eq:standAssumptStringent}
\mu<\frac{\sigma^{2}}{2x_{0}}\log\left(\frac{n}{n-1}\right).
\end{equation}

We identify two notions that are crucial for this discussion. First, the Lorenz order, which is a well-known measure of inequality in economics~\cite{ArnoldSarabia.18}.
Given two reward vectors $R$ and $\tR$ with the same total reward, $\tR$ exhibits \emph{less inequality than~$R$ in Lorenz order,} or
\[ \tR \leq_{L} R,\qquad \mbox{if}\qquad \sum_{i=1}^k \tR_i \leq \sum_{i=1}^k R_i \qforq k=1,\ldots, n.\]
Among all normalized reward vectors, the winner-takes-all scheme is the largest in Lorenz order whereas the uniform reward ($R_{1}=\dots=R_{n-1}$) is the smallest.
The upper bound $\bar{x}_{R}$ of the support of the equilibrium distribution $F$ corresponding to~$R$, cf.~\eqref{eq:xbar}, is increasing in~$R_{1}$. Hence, $\tR \leq_{L} R$ implies $\bar{x}_{R}\geq\bar{x}_{\tR}$, so that~$F$ and~$\tF$ (corresponding to~$\tR$)
are both concentrated on $(0,\bar{x}_{R})$.

The second notion refers to the  equilibrium distribution.
Given two cdf~$F$ and~$\tF$, we say that~$\tF$ is \emph{strictly single crossing wrt.~$F$} if there are $a<x_{1}<b$ with $F(a)=\tF(a)=0$ and $F(b)=\tF(b)=1$ as well as
\[
\tF<F \quad\mbox{on}\quad (a,x_{1}) \qquad\mbox{and}\qquad\tF>F  \quad\mbox{on}\quad (x_{1},b).
\]
Where it is useful to be more explicit, we say that the functions are strictly single crossing on $(a,b)$ with crossing point $x_{1}$. In  words, $\tF-F$ crosses the horizontal axis exactly once, in an increasing fashion, in an interval supporting both distributions. It means that as $F$ is transformed into $\tF$, a nontrivial part of the mass below $x_{1}$ is transported above $x_{1}$, thus reflecting an upward-mobility (in terms of level of stopping) inside the population of players.

Using the language of~\cite[Section~1.1]{DiamondStiglitz.74}, the economic interpretation of the following theorem is that a more unequal reward scheme induces a ``riskier'' equilibrium distribution.
In addition, it is also a tool for proving several of the results below.

\begin{figure}[h]
\centering
\includegraphics[height=7cm]{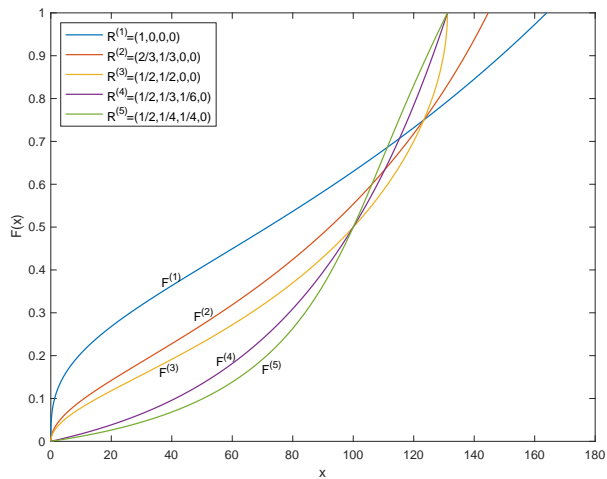}
\caption{Single crossing property of the equilibrium cdf $F^{(i)}$ corresponding to rewards $R^{(1)}\ge_L  \cdots \ge_L R^{(5)}$. Here $\mu=-0.01$, $\sigma=1$ and $x_0=100$. For $i=1,2,3$, the schemes $R^{(i)}$ only differ in the first two ranks and then $F^{(i)}$ intersect at a common point. Similarly for $i=3,4,5$, where the schemes differ in the second and third ranks. The distributions for $i=3,4,5$ have the same support; cf.\ \eqref{eq:xbar}.} 
\label{fig:F}
\end{figure}

\begin{theorem}\label{thm:single-crossing}
Let $R, \tR$ be distinct reward vectors and $F, \tF$ the corresponding equilibrium distributions. If $\tR \leq_{L} R$, then $\tF$ is strictly single crossing wrt.\ $F$.
\end{theorem}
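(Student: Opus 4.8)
The plan is to collapse the comparison of the two cdf into a sign analysis of the single function $\phi:=g-\tilde g$, where $g,\tilde g$ are the functions from Lemma~\ref{le:solFexists} attached to $R$ and $\tR$. The crucial structural fact is that the equilibrium value function \eqref{eq:valueFun} depends on the reward only through $R_{n}$ and $\bar R$, both of which are common to $R$ and $\tR$ under our normalization; hence the value function is literally the same for both, the only difference being its domain. Since $\tR\le_{L}R$ forces $\tR_{1}\le R_{1}$, and $\bar x$ is increasing in $R_{1}$ by \eqref{eq:xbar}, we have $\bar x_{\tR}\le \bar x_{R}$, so both distributions are concentrated on $(0,\bar x_{R})$ and the two value functions agree on the common interval $[0,\bar x_{\tR}]$. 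There the defining relations read $g(F(x))=u(x)=\tilde g(\tF(x))$, whence $\phi(F(x))=\tilde g(\tF(x))-\tilde g(F(x))$; as $\tilde g$ is strictly increasing, the sign of $\tF(x)-F(x)$ equals that of $\phi(F(x))$. On the leftover interval $(\bar x_{\tR},\bar x_{R})$ one has $\tF\equiv 1>F$. Because $F$ is a strictly increasing homeomorphism of $[0,\bar x_{R}]$ onto $[0,1]$, the theorem reduces to the statement that $\phi$ changes sign exactly once on $(0,1)$, from negative to positive.

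To expose the sign structure of $\phi$, write $\phi(y)=\sum_{k=1}^{n}D_{k}b_{k}(y)$ with $D_{k}:=R_{k}-\tR_{k}$ and $b_{k}(y):=\binom{n-1}{k-1}y^{n-k}(1-y)^{k-1}$. Abel summation, using the partial sums $S_{k}:=\sum_{j\le k}D_{j}$ and the boundary values $S_{0}=S_{n}=0$, gives
\[
  \phi(y)=\sum_{k=1}^{n-1}S_{k}\bigl(b_{k}(y)-b_{k+1}(y)\bigr),
  \qquad
  b_{k}(y)-b_{k+1}(y)=\binom{n}{k}y^{n-k-1}(1-y)^{k-1}\Bigl(y-\tfrac{n-k}{n}\Bigr).
\]
The hypothesis $\tR\le_{L}R$ is precisely $S_{k}\ge 0$ for all $k$, with not all $S_{k}$ zero since $R\neq\tR$. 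Each summand is a single-crossing function of $y$, negative then positive, with crossing point $\theta_{k}=(n-k)/n$, and these crossing points strictly decrease in $k$. Consequently every summand is negative for $y$ near $0$ and positive for $y$ near $1$, so $\phi<0$ near $0$ and $\phi>0$ near $1$; in particular the direction of any crossing is fixed, matching the first transfer ($\tR\leq_L R$) producing a ``riskier'' distribution, and the correspondence $a=0$, $b=\bar x_R$ in the definition of single crossing is in place.

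The heart of the argument—and what I expect to be the main obstacle—is upgrading ``negative near $0$, positive near $1$'' to ``\emph{exactly one} sign change.'' A nonnegative combination of single-crossing functions is in general not single crossing, so the ordering of the $\theta_{k}$ and the nonnegativity of the $S_{k}$ must be used jointly rather than term by term; in particular reducing $\tR\le_{L}R$ to a chain of elementary transfers does not suffice, since single crossing is not preserved under composition. The natural tool is the variation-diminishing (total positivity) property of the Bernstein system $\{b_{k}\}$, which bounds the number of sign changes of $\phi$ by the number of sign changes of its coefficients; but applied naively to $(D_{k})$ this bound exceeds one. My plan is therefore to pass to the integrated representation $\phi=\tfrac1n G'$, where $G(y)=\sum_{l=0}^{n}(-S_{n-l})\binom{n}{l}y^{l}(1-y)^{n-l}$ is the degree-$n$ Bernstein polynomial whose coefficients are nonpositive and vanish at both endpoints, so that the single-crossing claim becomes the unimodality (decrease then increase) of $G$. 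I would attempt to derive this unimodality from the sign-definiteness of the coefficients of $G$ together with the partial-sum structure of the $S_{k}$, invoking total positivity once more; closing this final step rigorously is the crux, while everything preceding it is bookkeeping around the shared value function.
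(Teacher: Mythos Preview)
Your reduction is correct and lands on the same pivot as the paper: since $u$ depends on $R$ only through $R_n$ and $\bar R$, one has $g(F(x))=u(x)=\tilde u(x)=\tilde g(\tF(x))$ on $[0,\bar x_{\tR}]$, so the single-crossing of $\tF$ versus $F$ reduces to the single sign change of $\phi=g-\tilde g$ on $(0,1)$. Your Abel-summation argument for the endpoint signs is a neater alternative to the paper's route, which instead chains elementary two-rank (Robin--Hood) transfers---each known to be single-crossing---to squeeze $\tF-F$ below zero on a left interval and above zero on a right interval. Both approaches deliver existence of at least one crossing.

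The gap is exactly where you flag it, and your proposed fix does not close it. Passing to $G$ with $\phi=\tfrac1n G'$ is correct algebra, but the information ``$G\le 0$, $G(0)=G(1)=0$'' is precisely what variation diminishing gives you from the nonpositive Bernstein coefficients of $G$, and it says nothing about unimodality; applied instead to $\phi$ itself, the VD bound is the number of sign changes of $(D_k)$, which can exceed one under $S_k\ge 0$ (e.g.\ $D=(1,-1,1,-1)$ has $S=(1,0,1,0)$ and three sign changes). So total positivity, used at either level, only recovers what you already know.

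The paper's uniqueness argument is genuinely different from your sketch and is worth contrasting. Rather than bounding sign changes of $\phi$ directly, it divides $(\tilde g-g)(y)$ by the positive factor $y^{n}/(1-y)$ and expands via the Hardy--Littlewood--P\'olya decomposition $\tR-R=\sum_{(i,j)}\delta_{i,j}(e_j-e_i)$, $i<j$, $\delta_{i,j}>0$, obtaining
\[
\sum_{(i,j)}\delta_{i,j}\,\frac{(1-y)^{i}}{y^{j}}\,G_{i,j}(y),
\qquad
G_{i,j}(y)=\tbinom{n-1}{j-1}(1-y)^{j-i}-\tbinom{n-1}{i-1}y^{j-i},
\]
and then argues uniqueness from the strict monotonicity of each $G_{i,j}$ and each weight $(1-y)^{i}/y^{j}$. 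In your coordinates this is the substitution $t=(1-y)/y$: since $\phi(y)=y^{n-1}\sum_k D_k\binom{n-1}{k-1}t^{k-1}$, the zero count of $\phi$ on $(0,1)$ equals the positive-root count of a polynomial in $t$, and the paper's device amounts to feeding the transfer decomposition (not merely $S_k\ge 0$) into that polynomial. If you want to repair your own line, this is the concrete move you are missing: change variables to $t=(1-y)/y$ and exploit the pairwise-transfer structure, rather than hoping that the aggregate constraint $S_k\ge 0$ alone forces unimodality of $G$---it does not.
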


\begin{proof}
Following Hardy, Littlewood and P\'olya (see \cite{MarshallOlkin.11}), the first step is to observe the result in the special case when the rewards differ only at two ranks: Fix $1\leq i < j <n$ and consider reward vectors $R,R^{\delta}$ where $R_{j}^{\delta}=R_{j}+\delta$ and $R_{i}^{\delta}=R_{i}-\delta$ and $R_{k}^{\delta}=R_{k}$ for $k\neq i,j$.
Let $F, F_\delta$ be the corresponding equilibrium distributions. Then for $\delta>0$, $F_\delta$ is strictly single crossing with respect to $F$ on $(0,\bar x_{F})$. Indeed, $\delta\mapsto F_{\delta}(x)$ is strictly decreasing for $x\in(0, x_1)$ and strictly increasing in $\delta$ for $x\in (x_{1},\bar x_{F})$, for a suitable~$x_{1}$. This can be shown by direct arguments, or one may combine the result of \cite[Lemma~9]{FangNoe.16} for capacity-constrained games with the transformation mentioned in the proof of Proposition~\ref{pr:equilibrium}.

Second, we observe that the change from $R$ to $\tR$ can be decomposed into a finite sequence $R^{(0)}, \ldots, R^{(N)}$ of such two-rank transformations, where $R^{(0)}=R$ and $R^{(N)}=\tR$. This is easily seen by induction (see \cite[Lemma~B.1, p.\,32]{MarshallOlkin.11} for a detailed proof). 
If the single crossing property were transitive, Theorem~\ref{thm:single-crossing} would be a direct consequence. It is not transitive, of course---but a careful argument is nevertheless successful.

Let $F_k$ be the equilibrium distribution induced by $R^{(k)}$. By the above, $F_k$ is strictly single crossing with respect to $F_{k-1}$. Let $x_{k}$ denote the crossing point,
$x_{\min}:=\min_{1\le k\le N} x_k$ and $x_{\max}:=\max_{1\le k\le N} x_k$, then $0<x_{\min}\le x_{\max}<\bar x_R$. For $x\in (0,x_{\min})$, the pairwise strict single crossing property implies $F_k(x)<F_{k-1}(x)$ for all~$k$,  hence $\tF(x)< F(x)$. A similar argument shows that $\tF(x)> F(x)$ for $x\in (x_{\max}, \bar x_R)$. Thus, by continuity, $\tF-F$ must cross zero from below at least once in $(x_{\min}, x_{\max})\subset (0, \bar x_R)$.

It remains to show that the zero of $\tF-F$ in $(0, \bar x_R)$ is unique. To this end, let $x_0\in(0,\bar x_R)$ be a zero of $\tF-F$ and $y_0=F(x_0)=\tF(x_0)$. As $F$ has a positive density on $(0,\bar x_R)$, it suffices to show the uniqueness of $y_0$. 
Note that $\tF(x_0)=F(x_0)<F(\bar x_R)=1$ implies $x_0< \bar x_{\tR}$. Since $R$ and $\tR$ have the same average and $R_1\ge \tR_1$, we see that $g(F(x))=u(x)=\tilde u(x)=\tilde g(\tF(x))$ on $[0,\bar x_{\tR}]$.  Setting $x=x_0$ yields $(\tilde g-g)(y_0)=0$; that is, $y_0$ must be a zero of $\tilde g-g$ in $(0,1)$.

Write $\tR-R=\sum_{(i,j)}\delta_{i,j} (e_j-e_i)$ where $e_{i}$ is the $i$-th basis vector and each term in the finite sum represents an inequality-reducing transformation changing the reward at two ranks: the amount $\delta_{i,j}>0$ is moved from the $i$-th place to the $j$-th place, where $i<j$. Let $P_k(y)$ be the probability of winning rank $k$ at location $y\in [0,1]$ if $(n-1)$ other random variables are i.i.d.\ and uniform on $[0,1]$.
Then
\begin{align*}
(\tilde g-g)(y)&=\sum_{k=1}^n (\tR_k-R_k) P_k(y)=\sum_{(i,j)} \delta_{i,j} (P_j(y)-P_i(y))\\
&=\sum_{(i,j)} \delta_{i,j} \left[ {{n-1}\choose{j-1}} y^{n-j}(1-y)^{j-1}-{{n-1}\choose{i-1}} y^{n-i}(1-y)^{i-1}\right]\\
&=\sum_{(i,j)} \delta_{i,j} y^{n-j}(1-y)^{i-1}\left[ {{n-1}\choose{j-1}} (1-y)^{j-i}-{{n-1}\choose{i-1}} y^{j-i}\right].
\end{align*}
Writing $G_{i,j}(y)$ for the expression in square brackets,
$(\tilde g-g)(y_0)=0$ and $0<y_0<1$ imply
$\sum_{i,j} \delta_{i,j} \frac{(1-y_0)^{i}}{y_0^j} G_{i,j}(y_0)=0.$
Both $G_{i,j}(y)$ and $(1-y)^i/y^j$ are strictly decreasing on $(0,1)$. Together with $\delta_{i,j}>0$, we conclude that $y_0$ is unique.
\end{proof}

\subsection{Performance of an Average Player}

Suppose a principal derives utility from the individual agent performance $X_\tau$ according to a utility function~$\phi$, then the expected utility in  equilibrium is
\[E[\phi(X_\tau)]=\int_0^\infty \phi(x)dF(x).\] 
We recall the scale function $h$ defined in~\eqref{eq:defh}, a smooth function with $h'>0$ that is concave for $\mu\geq0$ and convex for $\mu\leq0$.

\begin{theorem}\label{thm:utility}
Let $R, \tR$ be distinct reward vectors with $\tR \leq_{L} R$
and $F, \tF$ the corresponding equilibrium distributions.
Let $\phi: \R_+\rightarrow \R$ be an increasing, absolutely continuous function.
\begin{itemize}
\item[(i)] If $\phi'/h'$ is increasing on $(0,\bar x_R)$, then $\int_0^\infty \phi(x)d\tF(x)\le \int_0^\infty \phi(x)dF(x)$.
\item[(ii)] If $\phi'/h'$ is decreasing on $(0,\bar x_R)$, then $\int_0^\infty \phi(x)d\tF(x)\ge \int_0^\infty \phi(x)dF(x)$.
\end{itemize}
The inequalities are strict unless $\phi=ah+b$ for some constants $a,b$. 
\end{theorem}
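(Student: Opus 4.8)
The plan is to reduce the comparison of the two utility integrals to a one-dimensional inequality between the cdf's, proved via integration by parts, and then to exploit the single-crossing property of Theorem~\ref{thm:single-crossing} together with the fact that both equilibrium distributions assign the same value to $\int h\,dF$.

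First I would set $D:=\tF-F$ and record two structural facts. By Lemma~\ref{le:solFexists}, equilibrium distributions satisfy $\int h\,dF=\int h\,d\tF=1$, which gives the linear constraint $\int h\,dD=0$. Since $\tR\le_L R$ implies $\bar x_{\tR}\le\bar x_R$, the function $D$ is continuous on $[0,\bar x_R]$ with $D(0)=D(\bar x_R)=0$, and Theorem~\ref{thm:single-crossing} supplies a crossing point $x_1\in(0,\bar x_R)$ with $D<0$ on $(0,x_1)$ and $D>0$ on $(x_1,\bar x_R)$.

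Next, using that $\phi$ is absolutely continuous on the compact interval $[0,\bar x_R]$ and that the boundary terms vanish, integration by parts gives
\[
\int_0^\infty \phi\,d\tF-\int_0^\infty\phi\,dF=\int_0^{\bar x_R}\phi\,dD=-\int_0^{\bar x_R} D\,\phi'\,dx .
\]
The same computation applied to $h$ turns the constraint $\int h\,dD=0$ into $\int_0^{\bar x_R} D\,h'\,dx=0$. Writing $\phi'=\psi\,h'$ with $\psi:=\phi'/h'$ and subtracting the vanishing quantity $\psi(x_1)\int_0^{\bar x_R} D\,h'\,dx$, I would arrive at the centered identity
\[
\int_0^\infty \phi\,d\tF-\int_0^\infty\phi\,dF=-\int_0^{\bar x_R} D(x)\bigl(\psi(x)-\psi(x_1)\bigr)h'(x)\,dx .
\]
In case (i), $\psi$ is increasing, so $\psi-\psi(x_1)$ carries the same sign as $D$ on each of $(0,x_1)$ and $(x_1,\bar x_R)$; since $h'>0$, the integrand is nonnegative and the left-hand side is $\le 0$. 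In case (ii) the sign of $\psi-\psi(x_1)$ is reversed, giving $\ge 0$.

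For strictness, I would observe that strict single crossing gives $D\neq 0$ on $(0,\bar x_R)\setminus\{x_1\}$, so the signed integrand can vanish a.e.\ only if $\psi\equiv\psi(x_1)$ a.e.; by absolute continuity this is equivalent to $\phi'=a h'$, i.e.\ $\phi=ah+b$. The real work here is bookkeeping rather than a hard estimate: one must use that it is the \emph{equilibrium} (not merely feasible) distributions that satisfy $\int h\,dF=1$ in order to get $\int h\,dD=0$, handle the possibly strict inclusion of supports so that every integral lives on the common interval $(0,\bar x_R)$, and---most importantly---choose the centering constant $\psi(x_1)$ precisely at the crossing point so that the single-crossing geometry of $D$ and the monotonicity of $\phi'/h'$ line up to fix the sign of the integrand.
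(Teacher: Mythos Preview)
Your proof is correct and follows essentially the same route as the paper: integration by parts to get $-\int D\phi'$, the constraint $\int Dh'=0$ from Lemma~\ref{le:solFexists}, and the single-crossing property of Theorem~\ref{thm:single-crossing} combined with the monotonicity of $\phi'/h'$ to fix the sign. The only cosmetic difference is that you write the argument as a centered identity $-\int D(\psi-\psi(x_1))h'$, whereas the paper phrases the same step as the inequality $\int Dh'\psi\ge \psi(x_1)\int Dh'=0$; the content is identical.
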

\begin{proof}
(i) Integration by parts yields
\begin{align*}
\int_0^\infty \phi(x) d(\tF-F)(x)&=-\int_0^{\bar x_R} (\tF-F)(x)\phi'(x)dx.
\end{align*}
By Theorem~\ref{thm:single-crossing}, $\tF$ is strictly single crossing wrt.\ $F$ with some crossing point $x_1\in(0, \bar x_R)$. As $\phi'/h'$ is increasing and $h'>0$, 
\[\int_0^{\bar x_R} (\tF-F)(x)h'(x) \frac{\phi'(x)}{h'(x)}dx\ge  \frac{\phi'(x_1)}{h'(x_1)} \int_{0}^{\bar x_R} (\tF-F)(x)h'(x)dx.\]
Another integration by parts gives
\begin{align*}
\int_{0}^{\bar x_R} (\tF-F)(x)h'(x)dx&=(\tF-F)(x) h(x)\bigg|_{x=0}^{\bar x_R}-\int_0^{\bar x_R}h(x)d(\tF-F)(x)
=0,
\end{align*}
where the last equality holds by Lemma~\ref{le:solFexists}. Combining the above displays, we have
$\int_0^\infty \phi(x) d(\tF-F)(x)\le 0$, and the inequality is strict unless $\phi'/h'\equiv \phi'(x_1)/h'(x_1)$ a.e. 
The proof of~(ii) is analogous.
\end{proof}

Specializing to risk-averse and risk-seeking utility functions, we obtain the following.

\begin{corollary}\label{cor:utility}
Let $R, \tR, F, \tF, \phi$ be as in Theorem~\ref{thm:utility}. 
\begin{itemize}
\item[(i)] If $\phi$ is convex and $\mu\ge 0$, then $\int_0^\infty \phi(x)d\tF(x)\le \int_0^\infty \phi(x)dF(x)$.
\item[(ii)] If $\phi$ is concave and $\mu\le 0$, then $\int_0^\infty \phi(x)d\tF(x)\ge\int_0^\infty \phi(x)dF(x)$.
\end{itemize}
If $\mu\neq 0$ and $\phi$ is not constant, the asserted inequality is strict.
\end{corollary}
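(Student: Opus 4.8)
The plan is to obtain both statements as immediate specializations of Theorem~\ref{thm:utility}, the only real work being to translate the sign of $\mu$ and the convexity/concavity of $\phi$ into the monotonicity of the quotient $\phi'/h'$ that the theorem requires. I would begin by recalling the two facts about the scale function noted just before Theorem~\ref{thm:utility}: $h$ is smooth with $h'>0$, and it is concave for $\mu\ge 0$ and convex for $\mu\le 0$. Equivalently, $h'$ is positive and nonincreasing when $\mu\ge 0$, and positive and nondecreasing when $\mu\le 0$, so that $1/h'$ is positive and nondecreasing in the former case and positive and nonincreasing in the latter.

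For part~(i), convexity of $\phi$ gives that $\phi'$ is nondecreasing, while $\phi$ increasing gives $\phi'\ge 0$; since $\mu\ge 0$ makes $1/h'$ positive and nondecreasing, the product $\phi'/h'$ of two nonnegative nondecreasing factors is nondecreasing on $(0,\bar x_R)$, and Theorem~\ref{thm:utility}(i) yields $\int_0^\infty \phi\,d\tF\le \int_0^\infty\phi\,dF$. Part~(ii) is symmetric: concavity of $\phi$ makes $\phi'$ nonincreasing and $\ge 0$, while $\mu\le 0$ makes $1/h'$ positive and nonincreasing, so $\phi'/h'$ is a product of nonnegative nonincreasing factors, hence nonincreasing, and Theorem~\ref{thm:utility}(ii) gives the reversed inequality. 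Since $\phi$ is only assumed absolutely continuous, all monotonicity statements for $\phi'$ are understood in the a.e.\ sense, which is exactly what Theorem~\ref{thm:utility} uses.

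For the strictness claim when $\mu\neq 0$, I would invoke the equality case of Theorem~\ref{thm:utility}: equality forces $\phi=ah+b$ for constants $a,b$, so $\phi'=ah'$ and $\phi''=ah''$. When $\mu\neq 0$ the scale function is \emph{strictly} convex or concave, so $h''$ has a strict sign on $(0,\bar x_R)$, and combined with $h'>0$ this pins down $a$. Concretely, in case~(i) with $\mu>0$ we have $h''<0$, so convexity of $\phi$ ($\phi''=ah''\ge 0$) forces $a\le 0$, while $\phi$ increasing ($\phi'=ah'\ge 0$) forces $a\ge 0$; hence $a=0$ and $\phi\equiv b$ is constant. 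The concave/$\mu<0$ case is identical with the inequalities reversed. Thus a non-constant $\phi$ cannot attain equality, which gives the strict inequality. I expect no substantial obstacle; the only point requiring a little care is that strictness genuinely needs $\mu\neq 0$ (for $\mu=0$ the scale function $h$ is affine, every affine $\phi=ah+b$ lies in the equality case, and indeed both integrals agree because the equilibrium always satisfies $\int h\,dF=1$), which is precisely why the hypothesis $\mu\neq 0$ is imposed.
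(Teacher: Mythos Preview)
Your proposal is correct and follows exactly the route the paper takes: the paper's proof is the single sentence ``This follows from the concavity/convexity of $h$ and Theorem~\ref{thm:utility},'' and you have simply unpacked that sentence, including the strictness case, with the right details.
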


\begin{proof}
This follows from the concavity/convexity of $h$ and Theorem~\ref{thm:utility}.
\end{proof}

Intuitively, the reward allocation induces a ``risk preference'' in agents. This comparison can be motivated via Remark~\ref{rk:valuesUsed}\,(b): Starting from a reward $R$, consider a concave increasing function $U$ and $\tR:=U(R)$. By an affine normalization of $U$ we may assume that $\tR$ is again a normalized reward. It is easy to see that $\tR \leq_{L}R$; cf.\ \cite[Proposition~B.2, p.\,188]{MarshallOlkin.11}. That is, risk-neutral players (as we have assumed) with reward allocation $\tR$ are equivalent to risk-averse players with allocation~$R$. 
Conversely, the more unequal the reward, the more risk-seeking agents become, staying longer in the game to gamble for a high performance (see also Corollary~\ref{co:average-time} below).
 
Corollary~\ref{cor:utility} shows that the principal should align agents' risk preferences with her own, provided that the market condition $\mu$ is not too strong a counter force. A negative drift reinforces a risk-averse principal's preference for agents to stop early, to reduce both variance and expected losses due to the drift, whereas a positive drift reinforces the preference to gamble and profit from the drift.
If the principal's preferences and the market condition are opposed, the trade-off results in an ambiguous comparison, as shown by the following example.

\begin{example}[Risk-averse principal in a bull market]\label{ex:principal-bull-market}
Let $\mu>0$ and $\phi(x)=-\frac{1}{\gamma}e^{-\gamma x}$ where $\gamma>0$. 
Then
$\frac{\phi'(x)}{h'(x)}=\frac{\sigma^2}{2\mu} (1-\exp (\frac{-2\mu x_{0}}{\sigma^{2}}))  \exp((\frac{2\mu }{\sigma^{2}}-\gamma)x)$;
thus $\phi'/h'$ is strictly increasing if $2\mu/\sigma^2>\gamma$, strictly decreasing if $2\mu/\sigma^2<\gamma$, and constant if $2\mu/\sigma^2=\gamma$. As a result, reward inequality is preferred for small values of the risk aversion $\gamma$ whereas equality is preferred for large values.
\end{example}

Clearly Corollary~\ref{cor:utility} can be used  to analyze the optimal reward scheme for the principal. We only state the result for linear utility.

\begin{corollary}\label{co:total-performance}
The expected performance $E[X_\tau]$ is strictly increasing wrt.\ the Lorenz order of the reward scheme when $\mu>0$, and strictly decreasing when $\mu<0$. In particular, $E[X_\tau]$ is maximized by the winner-takes-all scheme when $\mu>0$ and by the uniform reward when $\mu<0$. For $\mu=0$, the expected performance is independent of the reward.
\end{corollary}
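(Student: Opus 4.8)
The plan is to specialize Corollary~\ref{cor:utility} to the linear utility $\phi(x)=x$, which is at once convex and concave and, crucially, non-constant. Fix distinct reward vectors with $\tR\leq_{L}R$ and their equilibrium distributions $F,\tF$. When $\mu>0$, I would invoke part~(i) with this convex $\phi$ to obtain $\int_0^\infty x\,d\tF(x)\leq\int_0^\infty x\,dF(x)$, and since $\mu\neq0$ and $\phi$ is not constant the inequality is strict; reading this as $E_{\tF}[X_\tau]<E_F[X_\tau]$ says exactly that $E[X_\tau]$ strictly increases as the reward moves up in Lorenz order. Symmetrically, for $\mu<0$ I would apply part~(ii) with $\phi$ concave to get the reversed strict inequality, hence strict monotone decrease.

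For the boundary case $\mu=0$ the strict comparison degenerates, so I would argue directly. Here the scale function is $h(x)=x/x_{0}$, and every equilibrium distribution satisfies $\int h\,dF=1$ (as recorded in Lemma~\ref{le:solFexists}); hence $\int_0^\infty x\,dF(x)=x_{0}$ for every admissible reward, so $E[X_\tau]$ equals the constant $x_{0}$ independently of $R$. Equivalently, one may sandwich using both parts of Corollary~\ref{cor:utility}, since a linear $\phi$ meets the hypotheses of (i) and (ii) simultaneously when $\mu=0$, forcing equality.

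The ``in particular'' claims then follow from the two extremal facts recorded just after the definition of the Lorenz order: the winner-takes-all scheme is the largest, and the uniform reward the smallest, among normalized vectors in that order. Monotone increase for $\mu>0$ therefore places the maximizer at winner-takes-all, while monotone decrease for $\mu<0$ places it at the uniform reward. There is no genuine obstacle here---the statement is a corollary of the preceding comparison---but the one point requiring care is precisely the $\mu=0$ case, where the strict inequalities of Corollary~\ref{cor:utility} are unavailable; the clean resolution is to bypass the comparison altogether and read off the first moment from the feasibility constraint $\int h\,dF=1$.
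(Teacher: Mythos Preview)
Your proof is correct and follows essentially the same approach as the paper: both apply Corollary~\ref{cor:utility} with $\phi(x)=x$ and invoke the extremality of the winner-takes-all and uniform schemes in Lorenz order. Your treatment of the $\mu=0$ case is more explicit than the paper's terse statement, and both of your arguments (sandwiching via parts~(i) and~(ii), or reading off $\int x\,dF=x_0$ directly from $\int h\,dF=1$) are valid.
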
 
 
\begin{proof}
The result follows from Corollary~\ref{cor:utility} with $\phi(x)=x$ after noting that uniform and winner-takes-all are, respectively, the unique minimum and maximum elements wrt.\  Lorenz order among all normalized reward schemes. 
\end{proof}

\begin{figure}[tbh]
\centering
\includegraphics[height=6.2cm, width=9cm]{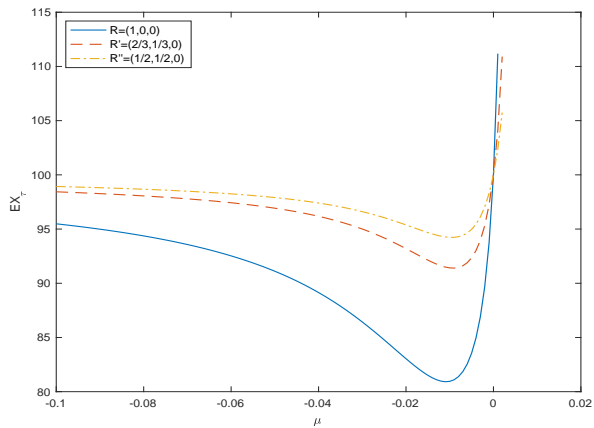}
\caption{Average performance $E[X_\tau]$ as a function of drift $\mu$ for three different reward schemes $R''\leq_{L}R'\leq_{L}R$. Here $x_0=100$ and $\sigma=1$. The vertical asymptotes of the three curves associated with $R$, $R'$ and $R''$ are at $\bar \mu=0.002$, 0.0035 and 0.0055, respectively.}
\label{fig:mu}
\end{figure}

See Figure~\ref{fig:mu} for numerical examples illustrating Corollary~\ref{co:total-performance}. The figure also shows that, similarly as in~\cite{SeelStrack.13}, the largest losses occur for an intermediate value of negative drift~$\mu$. While the corresponding~$\mu$ varies only slightly with the reward scheme, the losses for winner-takes-all are substantially larger than for the schemes with lower inequality.

As alluded above, we can show that higher reward inequality implies that players gamble longer, in line with the interpretation given below Corollary~\ref{cor:utility}. As players only care about their relative ranking and not about the absolute performance, it is natural that the sign of the drift does not appear in this  result. 

\begin{corollary}\label{co:average-time}
The expected duration $E[\tau]$ of play is monotone increasing wrt.\ the Lorenz order of the reward scheme. In particular, it is maximized by the winner-takes-all and minimized by the uniform scheme.
\end{corollary}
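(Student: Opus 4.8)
The plan is to compute $E[\tau]$ explicitly as a function of the equilibrium distribution $F$ and then exploit the single-crossing property from Theorem~\ref{thm:single-crossing}, much as in the proof of Theorem~\ref{thm:utility}. The first step is to express the expected stopping time in terms of the stopping level. Since $X_t = x_0 + \mu t + \sigma W_t$ and $W$ is a martingale, optional stopping (justified because $\tau<\infty$ under the standing assumption and $X$ is absorbed at $0$) gives $E[X_\tau] = x_0 + \mu E[\tau]$ when $\mu\neq 0$, so that
\begin{equation*}
E[\tau]=\frac{E[X_\tau]-x_0}{\mu}=\frac{1}{\mu}\left(\int_0^\infty x\,dF(x)-x_0\right).
\end{equation*}
The driftless case needs a different identity, since the expression above is indeterminate; here I would use that $X_t^2 - \sigma^2 t$ is a martingale, yielding $E[\tau]=\sigma^{-2}(E[X_\tau^2]-x_0^2)=\sigma^{-2}\int_0^\infty x^2\,dF(x)-\sigma^{-2}x_0^2$ after optional stopping.

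With these formulas in hand, the second step is to sign the change in $E[\tau]$ as $R$ is replaced by a less unequal $\tR\leq_L R$. For $\mu>0$, Corollary~\ref{co:total-performance} tells us $E[X_\tau]$ is strictly increasing in Lorenz order, so $\tR\leq_L R$ gives a \emph{smaller} $E[X_{\tilde\tau}]$; dividing by $\mu>0$ preserves the inequality and yields $E[\tilde\tau]\le E[\tau]$, i.e.\ $E[\tau]$ increases with inequality. For $\mu<0$, Corollary~\ref{co:total-performance} gives that $E[X_\tau]$ is strictly \emph{decreasing} in Lorenz order, so $\tR\leq_L R$ now gives a \emph{larger} $E[X_{\tilde\tau}]$; but dividing by the negative $\mu$ reverses the inequality, again producing $E[\tilde\tau]\le E[\tau]$. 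Thus the sign flip in the performance comparison is exactly cancelled by the sign of $\mu$, which is the reason the drift disappears from the statement. For $\mu=0$ I would instead apply Corollary~\ref{cor:utility}(i) with the convex function $\phi(x)=x^2$: since $\mu=0\ge 0$, reward inequality increases $\int x^2\,dF$, hence increases $E[\tau]=\sigma^{-2}(\int x^2\,dF - x_0^2)$, consistent with the other two cases.

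The extremal claim then follows immediately: since the uniform and winner-takes-all schemes are the unique minimum and maximum in Lorenz order among normalized rewards (as noted in the proof of Corollary~\ref{co:total-performance}), $E[\tau]$ attains its minimum at the uniform scheme and its maximum at winner-takes-all. The main obstacle I anticipate is purely technical rather than conceptual: making the optional stopping argument rigorous, i.e.\ verifying a uniform-integrability or $L^1$/$L^2$-boundedness condition so that $E[X_\tau]=x_0+\mu E[\tau]$ (and its driftless analogue) holds with no correction term. Because $\tau$ is the first exit time of a bounded interval $(0,\bar x_R)$ under the standing assumption $\mu<\bar\mu$ and the stopped process stays in $[0,\bar x_R]$, the level $X_\tau$ is bounded, so this should reduce to checking $E[\tau]<\infty$; once that is in place the rest is the elementary sign bookkeeping described above.
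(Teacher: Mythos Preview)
Your proposal is correct and follows essentially the same route as the paper: for $\mu\neq 0$ you use optional sampling to write $E[X_\tau]=x_0+\mu E[\tau]$ and then invoke Corollary~\ref{co:total-performance}, and for $\mu=0$ you apply optional sampling to $X_t^2-\sigma^2 t$ together with Corollary~\ref{cor:utility} for $\phi(x)=x^2$. One minor inaccuracy in your technical remark: the equilibrium $\tau$ is not literally the first exit time from $(0,\bar x_R)$---it is any (randomized) stopping time embedding $F$---but your essential point that $X_\tau\in[0,\bar x_R]$ is bounded, hence optional sampling is justified once $E[\tau]<\infty$, is the right one.
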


\begin{proof}
When $\mu\neq 0$, optional sampling yields $E[X_{\tau}]=x_{0}+\mu E[\tau]$. The result then follows from Corollary~\ref{co:total-performance}.
When $\mu=0$, we apply the optional sampling theorem to the martingale $X_t^2-\sigma^2 t$ and use Corollary~\ref{cor:utility} with $\phi(x)=x^2$.
\end{proof}

\begin{remark}\label{rk:stochOrder}
  If $\mu\le 0$, then $\tF$ dominates $F$ in second stochastic order; i.e., $\int_0^y (\tF(x)-F(x))dx \le 0$ for all $y\ge 0$. Indeed, this order is alternately characterized through integrals of increasing concave functions, so that the claim is a reformulation of Corollary~\ref{cor:utility}\,(ii). The interpretation is as above: a more equitable reward makes players prefer less variance and stop earlier, hence suffer less from the negative drift and achieve a higher performance in equilibrium.
  
  For $\mu>0$, Example~\ref{ex:principal-bull-market} shows that $\tF$ and $F$ cannot be ordered in this sense, as that would imply that the principal's preference is the same for all positive risk aversion parameters.
  
  For $\mu=0$, the game is equivalent to the capacity-constrained game of \cite{FangNoe.16} and the second stochastic dominance is shown in \cite[Proposition~5]{FangNoe.16}. For $\mu>0$, the order is not preserved by the transformation mentioned in the proof of Proposition~\ref{pr:equilibrium}, as evidenced by the aforementioned example.
\end{remark} 

\subsection{Performance of the First Rank}

Next, we study the problem of a principal aiming to maximize the expected equilibrium performance of the first-ranked player,
\[E\left[\max_{i=1,\ldots,n} X_{\tau_i}\right]=n \int_0^{\bar x} x F(x)^{n-1} dF(x).
\]
In contrast to the preceding subsection, this constitutes a nonlinear functional of the equilibrium distribution, and we obtain a result that is independent of the drift (even though the proofs differ depending on the sign). The first rank naturally incorporates an upwards bias relative to the average performance, and the difference increases with the volatility. For positive drift, this strongly suggests that the principal will profit from gambling and thus should encourage a long duration of the game. More surprisingly, the profit from volatility turns out to be more important than any losses that may occur due to a negative drift, so that reward inequality  is preferred in any market condition.
 
\begin{theorem}\label{th:1st-rank}
The expected performance $E[\max_i X_{\tau_i}]$ of the first-ranked player is strictly increasing wrt.\ the Lorenz order of the reward scheme. In particular, the winner-takes-all scheme is the unique maximizer.
\end{theorem}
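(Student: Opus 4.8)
The plan is to reduce the claim to \emph{elementary two-rank transfers} and then to prove a single monotonicity statement that works for every drift. First I would pass to the quantile representation
\[
E\Big[\max_i X_{\tau_i}\Big]=n\int_0^1 F^{-1}(y)\,y^{n-1}\,dy,
\]
obtained from $E[\max_i X_{\tau_i}]=n\int_0^{\bar x}xF(x)^{n-1}dF(x)$ via the substitution $y=F(x)$. Since $g(F(x))=u(x)$ and $u(x)=\bar R\,h(x)$ depends on $R$ only through $\bar R$ (Remark~\ref{rk:valuesUsed}), one has $F^{-1}(y)=u^{-1}(g(y))$ with $g(y)=\sum_k R_k P_k(y)$ and $P_k(y)=\binom{n-1}{k-1}y^{n-k}(1-y)^{k-1}$. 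As the functional is scalar, its monotonicity in Lorenz order is transitive, so by the decomposition of a Lorenz step into two-rank transfers used in the proof of Theorem~\ref{thm:single-crossing} it suffices to show that each inequality-reducing transfer strictly decreases it. Moving $\delta>0$ from rank $i$ to rank $j$ with $i<j$ changes $g$ into $g_\delta=g+\delta(P_j-P_i)$ while leaving $u^{-1}$ unchanged, whence
\[
\frac{d}{d\delta}E\Big[\max_i X_{\tau_i}\Big]=n\int_0^1 (u^{-1})'(g_\delta(y))\,y^{n-1}\,(P_j-P_i)(y)\,dy,
\]
and I would prove this is strictly negative.

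Second, I would factor $P_j-P_i=y^{n-j}(1-y)^{i-1}G_{ij}(y)$ with $G_{ij}$ as in the proof of Theorem~\ref{thm:single-crossing}: it is strictly decreasing and vanishes at a single $y^\ast\in(0,1)$, so $P_j-P_i$ is positive on $(0,y^\ast)$ and negative on $(y^\ast,1)$, while $\int_0^1(P_j-P_i)\,dy=0$ because $\int_0^1 P_k\,dy=1/n$ for every $k$. Writing $\rho(y):=(u^{-1})'(g_\delta(y))\,y^{n-1}>0$, the integral equals $\int_0^1(\rho-\rho(y^\ast))(P_j-P_i)\,dy$ since the mean constraint kills the constant $\rho(y^\ast)$. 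If $\rho$ is strictly increasing, then $\rho-\rho(y^\ast)$ and $P_j-P_i$ change sign at $y^\ast$ in opposite directions, so the integrand is $\le0$ pointwise and the integral is strictly negative. Thus the whole theorem reduces to the monotonicity of $\rho$.

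Third, and this is the crux, I would establish that $\rho$ is increasing for \emph{every} drift. From $h(F^{-1}(y))=g(y)/\bar R$ and the explicit scale function~\eqref{eq:defh} one computes $(u^{-1})'(g(y))=\tfrac{\beta/c}{\bar R+\beta g(y)}$ with $c=-2\mu/\sigma^2$ and $\beta=e^{cx_0}-1$, so $\rho(y)\propto y^{n-1}/(\bar R+\beta g(y))$, the proportionality constant $\beta/c$ being positive; the standing assumption $\mu<\bar\mu$ is exactly what makes $\bar R+\beta g>0$. For $\mu\ge0$ we have $\beta\le0$, so $\rho$ is a product of two nonnegative increasing factors and monotonicity is immediate. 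The difficult regime is $\mu<0$, where $\beta>0$ makes $1/(\bar R+\beta g)$ \emph{decreasing}, so $\rho$ is a product of an increasing and a decreasing factor and its monotonicity is genuinely in doubt. Here I would invoke the elasticity bound $y\,g'(y)\le(n-1)\,g(y)$, which follows termwise from $y\frac{d}{dy}\big[y^{n-k}(1-y)^{k-1}\big]\le(n-1)y^{n-k}(1-y)^{k-1}$ (the inequality reduces to $-(k-1)\le0$) together with the nonnegativity of the coefficients $R_k\binom{n-1}{k-1}$. This yields $\frac{d}{dy}\log\rho=\frac{n-1}{y}-\frac{\beta g'}{\bar R+\beta g}\ge0$, because $(n-1)(\bar R+\beta g)-y\beta g'\ge(n-1)\bar R>0$, so $\rho$ is strictly increasing.

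Combining the three steps gives $\frac{d}{d\delta}E[\max_i X_{\tau_i}]<0$ for each transfer and each drift; chaining over the decomposition yields the strict monotonicity in Lorenz order, and since winner-takes-all is the unique Lorenz-maximal normalized scheme it is the unique maximizer. I expect the $\mu<0$ case to be the main obstacle, with the elasticity estimate the decisive ingredient that tames the competing increasing/decreasing factors. I would also verify that along a reducing path the top reward $R_1$ never increases, so that the standing assumption—hence positivity of $\bar R+\beta g$—is preserved at every intermediate vector.
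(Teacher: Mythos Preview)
Your proof is correct and takes a route that differs from the paper's in a couple of instructive ways. The paper splits into two cases: for $\mu\ge 0$ it compares $F^{-1}$ and $\tilde F^{-1}$ directly via their single crossing and closes with Corollary~\ref{co:total-performance}, while for $\mu<0$ it uses the strict concavity of $R\mapsto E[Y^{(1)}]$ (Lemma~\ref{le:kthOrderStats}) along the segment between $R$ and $\tR$ and shows the directional derivative at $R$ is nonpositive by checking that $y^{n-1}/(nBg(y)+1)$ is increasing via the identity $nBg(y)+1=\sum_\ell(nBR_\ell+1)\binom{n-1}{\ell-1}((1-y)/y)^{\ell-1}\cdot y^{n-1}$. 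Your argument unifies both regimes: you reduce to two-rank transfers, differentiate in $\delta$, and recognize that the whole question is the monotonicity of $\rho(y)=(u^{-1})'(g_\delta(y))\,y^{n-1}\propto y^{n-1}/(\bar R+\beta g_\delta(y))$. For $\mu\ge 0$ this is immediate and bypasses Corollary~\ref{co:total-performance}; for $\mu<0$ your elasticity bound $y\,g'\le(n-1)g$ is an alternative (and equally short) way to obtain the same monotonicity that the paper gets from its factorization. What the paper's route buys is that for $\mu<0$ concavity lets it compare $R$ and $\tR$ directly without the two-rank decomposition; what your route buys is a single mechanism for all drifts and no reliance on the average-performance corollary. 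Two minor points you should make explicit in a final write-up: for $\mu=0$ the constant $\beta/c$ is indeterminate and you should simply use $(u^{-1})'=x_0/\bar R$ directly; and your closing remark that $R_1$ is nonincreasing along the HLP path is exactly right and, since $\bar\mu$ is decreasing in $R_1$, ensures $\mu<\bar\mu$ at every intermediate step.
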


The following lemma is required for the proof. For later use, we state it for the $k$-th rank rather than just the first rank.

\begin{lemma}\label{le:kthOrderStats}
  Let $R$ be a reward scheme and $F$ the associated equilibrium distribution. Let $(Y_{i})_{1\leq i\leq  n}$ be i.i.d.\ with distribution $F$ and denote by $Y^{(k)}$ the $k$-th reverse order statistic (the $k$-th largest value), where $1\leq k\leq n-1$.
  \begin{enumerate}
  \item
  If $\mu=0$, then
  \begin{equation}\label{eq:kthOrderStats}
    E[Y^{(k)}] = nx_{0} \frac{n!}{(2n-1)!} {{n-1}\choose{k-1}} \sum_{l=1}^{n} R_{l} 
    \phi(k,l), \quad \mbox{where}
  \end{equation}
  \begin{equation}\label{eq:phiDefn}
  \phi(k,l):=\frac{(2n-k-l)! (k+l-2)!}{(n-l)!(l-1)!}.
  \end{equation}
  
  \item
  If $\mu\neq 0$, then setting $A=\frac{-2\mu}{\sigma^{2}}$ and $B=\exp(Ax_{0})-1$, 
    $$
    E[Y^{(k)}] = n {{n-1}\choose{k-1}}  A^{-1} \int_{0}^{1}\log[nBg(y)+1)] y^{n-k}(1-y)^{k-1} dy.
  $$
  \end{enumerate}
    In particular, $E[Y^{(k)}]$ is strictly concave with respect to $R$ for $\mu<0$, strictly convex for $\mu>0$, and linear for $\mu=0$.
\end{lemma}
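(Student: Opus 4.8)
The plan is to reduce all three assertions to a single change of variables $y=F(x)$, under which the unknown equilibrium density $f$ disappears and the explicit relation $g(F(x))=u(x)$ from Lemma~\ref{le:solFexists} supplies everything. First I would record the density of the $k$-th reverse order statistic: since $Y^{(k)}$ has exactly $k-1$ of the $n$ samples strictly above it and $n-k$ strictly below,
\[
E[Y^{(k)}]=n\binom{n-1}{k-1}\int_0^{\bar x} x\,(1-F(x))^{k-1}F(x)^{n-k}f(x)\,dx.
\]
Substituting $y=F(x)$, so that $f(x)\,dx=dy$ and $x=x(y)$ is determined by $g(y)=u(x(y))$, transforms this into $n\binom{n-1}{k-1}\int_0^1 x(y)\,y^{n-k}(1-y)^{k-1}\,dy$. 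The two cases now differ only in the explicit expression for $x(y)$ obtained by inverting~$u$.

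For part~(i) I would use the driftless value function \eqref{eq:valuefunBarxMuNull}, which under the normalization $R_n=0$ reads $u(x)=(\bar R/x_0)x$ and hence gives $x(y)=(x_0/\bar R)\,g(y)$, linear in~$g$. Expanding $g(y)=\sum_{l=1}^n R_l\binom{n-1}{l-1}y^{n-l}(1-y)^{l-1}$ and integrating term by term, each summand is the Beta integral $\int_0^1 y^{2n-l-k}(1-y)^{l+k-2}\,dy=(2n-l-k)!(l+k-2)!/(2n-1)!$; collecting factorials turns $\binom{n-1}{l-1}(2n-l-k)!(l+k-2)!$ into $(n-1)!\,\phi(k,l)$, and together with $\bar R=1/n$ the prefactor becomes exactly $nx_0\,n!/(2n-1)!$, reproducing \eqref{eq:kthOrderStats}.

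For part~(ii) I would instead invert \eqref{eq:valueFun}: with $A=-2\mu/\sigma^2$ and $B=\exp(Ax_0)-1$ the relation $g(y)=(\bar R/B)(\exp(Ax(y))-1)$ solves to $x(y)=A^{-1}\log((B/\bar R)g(y)+1)$, and $\bar R=1/n$ gives the stated integrand $A^{-1}\log(nBg(y)+1)$. No factorial computation is needed here; this is simply the transformed integral.

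For part~(iii), linearity when $\mu=0$ is immediate from \eqref{eq:kthOrderStats}, where $E[Y^{(k)}]$ appears as an explicit linear functional of $R$. For $\mu\neq0$ the decisive points are that $R\mapsto g(y)$ is linear for each fixed~$y$, while $t\mapsto\log(nBt+1)$ has second derivative $-(nB)^2/(nBt+1)^2<0$ and is thus strictly concave irrespective of the sign of~$B$; composing this concave map with the linear map $R\mapsto g(y)$ and integrating against the nonnegative weight $y^{n-k}(1-y)^{k-1}$ preserves concavity, after which the sign of the outer factor $A^{-1}$ fixes the verdict, giving concavity for $\mu<0$ (where $A^{-1}>0$) and convexity for $\mu>0$ (where $A^{-1}<0$). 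I expect the main obstacle to be the \emph{strict} version: I would note that the functions $\{y^{n-l}(1-y)^{l-1}\}_{l=1}^n$ are linearly independent, so distinct reward vectors yield distinct polynomials~$g$ that agree only at finitely many points; strict concavity of the logarithm then forces a strict pointwise inequality on a set of full Lebesgue measure, and the weight being strictly positive on $(0,1)$ upgrades this to a strict inequality after integration. The only genuinely delicate bookkeeping is that $B$ and $A^{-1}$ change sign together as $\mu$ crosses zero, so the two sign reversals must be tracked jointly to land on the correct conclusion.
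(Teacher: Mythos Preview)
Your proposal is correct and follows essentially the same approach as the paper: the change of variables $y=F(x)$, the identity $q(y)=u^{-1}(g(y))$, the explicit inversion of $u$ in both cases, and the Beta-integral computation in the driftless case all match the paper's proof line for line. Your discussion of strictness in part~(iii)---via linear independence of the Bernstein-type polynomials and positivity of the weight---is more explicit than what the paper writes (it simply asserts the concavity/convexity conclusion), but this is a welcome elaboration rather than a different route.
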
 

\begin{proof}
  Recall that $F$ is strictly increasing on $[0,\bar x]$, hence admits an inverse $q:=F^{-1}$. Clearly
  \begin{align}\label{eq:kthOrderGeneral}
    E[Y^{(k)}] 
    &= n {{n-1}\choose{k-1}} \int_{0}^{\bar x} x F(x)^{n-k}(1-F(x))^{k-1} dF(x) \nonumber\\
    &= n {{n-1}\choose{k-1}} \int_{0}^{1} q(y) y^{n-k}(1-y)^{k-1} dy.
  \end{align}
  In view of $u(x)=g(F(x))$, we have $q(y) = u^{-1}(g (y))$ for $0\leq y\leq 1$.

  (i) Let $\mu=0$. As $R$ is normalized with $\bar R=1/n$, we obtain $u(x)=\frac{x}{nx_{0}}$ and $u^{-1}(y)=nx_{0}y$. As a result,
  $
    q(y) = nx_{0} g(y),
  $
  and then by~\eqref{eq:kthOrderGeneral},
  \begin{align*}
    E[Y^{(k)}] 
    &= n^{2}x_{0} {{n-1}\choose{k-1}} \int_{0}^{1} g(y) y^{n-k}(1-y)^{k-1} dy\\
    &= n^{2}x_{0} {{n-1}\choose{k-1}} \sum_{l=1}^{n} R_{l} {{n-1}\choose{l-1}} \int_{0}^{1} y^{2n-k-l} (1-y)^{k+l-2} dy.
  \end{align*}
  To compute this expression, we note that
  \begin{align*}
   \int_{0}^{1} y^{2n-k-l} (1-y)^{k+l-2} dy
    &=\BetaFun(2n-k-l+1,k+l-1)\\
    &=\frac{(2n-k-l)! (k+l-2)!}{(2n-1)!}
  \end{align*}
  where we have used that the Beta function $\BetaFun(x,y)=\int_{0}^{1} t^{x-1}(1-t)^{y-1}  dt$ satisfies the relation 
  $\BetaFun(x,y)=\Gamma(x)\Gamma(y)/\Gamma(x+y)$ with the Gamma function.
  As a result,
    \begin{align*}
    E[Y^{(k)}] 
    &= n^{2}x_{0} {{n-1}\choose{k-1}} \sum_{l=1}^{n} R_{l} {{n-1}\choose{l-1}}\frac{(2n-k-l)! (k+l-2)!}{(2n-1)!}\\
    & = nx_{0} \frac{n!}{(2n-1)!} {{n-1}\choose{k-1}} \sum_{l=1}^{n} R_{l} \frac{(2n-k-l)! (k+l-2)!}{(n-l)!(l-1)!}.
  \end{align*}
  
  (ii) Let $\mu\neq0$. Note 
  $h(x)=\frac{\exp(Ax)-1}{B}$, hence $h^{-1}(z)=A^{-1} \log(Bz+1)$.
  As $u(x)=\frac1n h(x)$ for $x\le \bar x$, we have $u^{-1}(z)=h^{-1}(nz)$; i.e.,
$$
  q(y)=u^{-1}(g(y))=A^{-1} \log(nBg(y)+1).
$$ 
This expression is well defined due to~\eqref{eq:standAssumptStringent}. In view of~\eqref{eq:kthOrderGeneral}, the claim follows.
\end{proof}

\begin{proof}[Proof of Theorem~\ref{th:1st-rank}]
Let $\tilde{R}\leq_{L}R$ be two reward schemes and $\tilde{F},F$ the corresponding equilibria. By Theorem~\ref{thm:single-crossing}, $\tilde{F}$ is strictly single crossing wrt.~$F$. 

(i) Case $\mu\geq0$. 
We also have that $F^{-1}$ is strictly single crossing with respect to $\tilde{F}^{-1}$ on $(0,1)$. Let $y_0$ be the crossing point, then
\[\int_0^1 (F^{-1}(y) -\tilde{F}^{-1}(y))y^{n-1}dy> y_0^{n-1} \int_0^1 (F^{-1}(y) -\tilde{F}^{-1}(y))dy\ge 0,\]
where the last inequality is due to Corollary~\ref{co:total-performance} and $\mu\geq0$.

(ii) Case $\mu<0$. For $\lambda\in[0,1]$, we define (cf.\ Lemma~\ref{le:solFexists})
\[\varphi(\lambda):= E[Y^{(1)}_\lambda]=  n A^{-1} \int_{0}^{1}\log[nB(\lambda \tilde{g}(y)+(1-\lambda)g(y))+1] y^{n-1} dy\] 
and show that $\varphi$ attains its unique maximum at $\lambda=0$. As $\varphi$ is strictly concave
(Lemma~\ref{le:kthOrderStats}), it suffices to show that the right derivative $\varphi'(0+)\le 0$.
Indeed,
\[\varphi'(0+)= nA^{-1} \int_{0}^{1}\frac{nB(\tilde{g}-g)(y) y^{n-1} }{ nBg(y)+1}   dy.
\]
As $\mu<0$, we have $B>0$ and one checks that the factor
\[\frac{y^{n-1}}{nBg(y)+1}=\left[\sum_{\ell=1}^n \left(nB R_\ell+1\right) {{n-1}\choose{\ell-1}} \left(\frac{1-y}{y}\right)^{\ell-1}\right]^{-1}\]
is increasing in $y$.
In view of $\tilde{R} \leq_{L}R$, $g$ is strictly single-crossing with respect to $\tilde{g}$ on $(0,1)$. Finally, $\int_{0}^{1} \tilde{g}(y) dy=\bar{R}=\int_{0}^{1} g(y) dy$. Together, these three facts imply that $\varphi'(0+)\leq0$.
\end{proof}

\subsection{Performance of the $k$-th Rank}

We consider a principal maximizing the expected performance of the $k$-th ranked player, where $1\leq k\leq n-1$. This problem is more involved that the first rank: if $k$ is close to 1 (relative to $n/2$), we may expect to see similar effects as for the first rank, but clearly the profits from volatility are weaker. A first guess may be that the principal should maximize the reward at the $k$-th rank in order to maximize $k$-th rank performance. While this is not always true, the following reward schemes nevertheless play a special role.

\begin{definition}\label{de:cutoff}
   For $1\leq  j \leq n-1$, the reward scheme $R^{j}=(R^{j}_{1},\dots,R^{j}_{n})$ with 
$$
   R^{j}_{i}=1/j,\quad i\leq j \qandq R^{j}_{i}=0,\quad i> j
$$
is called the cut-off at $j$.
\end{definition}

In words, $R^{j}$ distributes the total reward uniformly over the first $j$ ranks. This scheme maximizes the reward at the $j$-th rank. The winner-takes-all scheme $R^{1}$ and the uniform scheme $R^{n-1}$ are special cases.

We first focus on the case of zero drift which allows for the most detailed analysis. When $k=1$, we have seen in Theorem~\ref{th:1st-rank} that the winner-takes-all reward is optimal. The next result shows that the guess also holds for the second rank: it is optimal to reward the first two ranks equally, and give zero reward to the subsequent ranks. However, this does not extend to higher target ranks~$k\geq3$. While a cut-off reward is still optimal, it can be beneficial to extend the cut-off point beyond~$k$. The analytic description uses the function $\phi$ of~\eqref{eq:phiDefn}.

\begin{proposition}\label{pr:zeroDriftCutoff}
  Let $\mu=0$. Then the unique normalized reward scheme maximizing the expected performance $E[Y^{(k)}]$ of the $k$-th rank is the cut-off at $k_{*}$, where
  \begin{equation}\label{eq:kstar}
    k_{*}=\max\left\{j\geq k:  \phi(k,j)\geq \frac{1}{j-1}\sum_{l=1}^{j-1} \phi(k,l)\right\}.
  \end{equation} 
  In particular, the winner-takes-all scheme is optimal for $k=1$ and the cut-off at 2 is optimal for $k=2$. For $k\geq3$, it may happen that $k_{*}>k$. For instance, for $n=5$ and $k=3$, the cut-off at $k_{*}=4$ is optimal; for $n=10$ and $k=5$, the cut-off at $k_{*}=7$ is optimal (cf.\ Figure~\ref{fig:kstar}).
\end{proposition}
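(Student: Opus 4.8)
The plan is to exploit the linearity of $E[Y^{(k)}]$ in $R$ for $\mu=0$, established in Lemma~\ref{le:kthOrderStats}(i). Since we optimize a linear functional over the polytope of normalized, ordered reward schemes $\{R: R_1\geq\cdots\geq R_n\geq0,\ \sum_i R_i=1\}$, a maximizer is attained at a vertex. The vertices of this polytope are precisely the cut-off schemes $R^{j}$ of Definition~\ref{de:cutoff}: any ordered probability vector is a convex combination of the $R^{j}$, so the maximum of the linear objective is attained at some cut-off. Thus the first step is to reduce the problem to comparing $E[Y^{(k)}]$ across the cut-offs $R^{1},\dots,R^{n-1}$ and to argue uniqueness will follow from strict optimality of a single vertex.

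Next I would compute the objective at a cut-off. By~\eqref{eq:kthOrderStats}, up to a positive constant $C:=nx_0\frac{n!}{(2n-1)!}\binom{n-1}{k-1}$ independent of $R$, we have $E[Y^{(k)}]=C\sum_{l=1}^n R_l\,\phi(k,l)$. For the cut-off $R^{j}$ this becomes $\frac{C}{j}\sum_{l=1}^{j}\phi(k,l)$, i.e.\ $C$ times the running average $\bar\Phi(j):=\frac1j\sum_{l=1}^{j}\phi(k,l)$ of the sequence $l\mapsto\phi(k,l)$. So maximizing $k$-th rank performance over cut-offs is equivalent to finding the $j$ that maximizes the prefix average $\bar\Phi(j)$, subject to $k\leq j\leq n-1$; the lower bound $j\geq k$ can be justified separately (for $j<k$ the $k$-th rank receives zero reward, giving a strictly smaller value, since one checks $\phi(k,\cdot)$ makes those averages non-maximal). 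The maximizer of a prefix average is exactly the last index $j$ at which the next term $\phi(k,j)$ is at least the current running average $\bar\Phi(j-1)=\frac1{j-1}\sum_{l=1}^{j-1}\phi(k,l)$: as long as a new term exceeds the running average it raises it, and once terms fall below they lower it. This is precisely the definition of $k_*$ in~\eqref{eq:kstar}, so identifying $\bar\Phi$'s argmax with $k_*$ is the crux of the argument.

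The main obstacle, and the step requiring the most care, is establishing that $\bar\Phi(j)$ is genuinely \emph{unimodal} in $j$ so that $k_*$ defined by the \emph{last} crossing is truly the global maximizer (and is unique). This requires understanding the monotonicity structure of $l\mapsto\phi(k,l)$ from~\eqref{eq:phiDefn}. I would examine the ratio
\[
\frac{\phi(k,l+1)}{\phi(k,l)}=\frac{(2n-k-l-1)!\,(k+l-1)!\,(n-l)!\,(l-1)!}{(2n-k-l)!\,(k+l-2)!\,(n-l-1)!\,l!}=\frac{(k+l-1)(n-l)}{(2n-k-l)\,l},
\]
whose numerator and denominator are each quadratic in $l$; analyzing where this ratio crosses $1$ shows $\phi(k,\cdot)$ is first increasing then decreasing (single-peaked) in $l$. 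A single-peaked sequence has a unimodal prefix-average sequence, which guarantees that the condition $\phi(k,j)\geq\bar\Phi(j-1)$ holds for an initial run of indices and fails thereafter, so the maximal such $j$ is the unique argmax of $\bar\Phi$; strict inequality at the optimum yields uniqueness of the reward scheme.

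Finally, for the explicit claims I would specialize. For $k=1$ one checks $\phi(1,\cdot)$ is strictly decreasing, so already $\phi(1,2)<\phi(1,1)=\bar\Phi(1)$ and $k_*=1$ (winner-takes-all). For $k=2$ a direct computation of $\phi(2,1),\phi(2,2),\phi(2,3)$ shows $\phi(2,2)\geq\bar\Phi(1)$ but $\phi(2,3)<\bar\Phi(2)$, giving $k_*=2$. The cases $n=5,k=3$ and $n=10,k=5$ are then verified by evaluating the criterion~\eqref{eq:kstar} numerically, yielding $k_*=4$ and $k_*=7$ respectively, matching Figure~\ref{fig:kstar}.
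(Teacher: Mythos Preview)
Your approach is essentially the same as the paper's: both hinge on the linearity of $E[Y^{(k)}]$ in $R$ from Lemma~\ref{le:kthOrderStats}(i), the ratio computation
\[
\frac{\phi(k,l+1)}{\phi(k,l)}=\frac{(k+l-1)(n-l)}{(2n-k-l)\,l}
\]
showing $\phi(k,\cdot)$ is single-peaked at $l=k$, and the identification of the optimal cut-off with the maximizer of the prefix average $\bar\Phi(j)$. You are more explicit than the paper about the polytope/vertex reduction and about why a single-peaked sequence yields a unimodal prefix average (the paper simply asserts that the single-peak of $\phi$ forces equal rewards on ranks $1,\dots,k_*$), which is helpful.

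One remark to correct: your parenthetical ``for $j<k$ the $k$-th rank receives zero reward, giving a strictly smaller value'' is not a valid argument---$E[Y^{(k)}]$ depends on the induced equilibrium distribution, not on whether rank $k$ is paid. The right justification, which you also give, is that $\phi(k,\cdot)$ is strictly increasing on $\{1,\dots,k\}$, so $\phi(k,j)>\bar\Phi(j-1)$ automatically for every $j\le k$; hence the set in~\eqref{eq:kstar} loses nothing by restricting to $j\ge k$. With that fixed, and after verifying the unimodality of $\bar\Phi$ (which follows since once $a_j<\bar\Phi(j-1)$ in a decreasing tail, $\bar\Phi(j)$ lies strictly between $a_j$ and $\bar\Phi(j-1)$, forcing $a_{j+1}\le a_j<\bar\Phi(j)$), your argument is complete and matches the paper's.
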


\begin{figure}[bth]
\centering
\includegraphics[width=\textwidth]{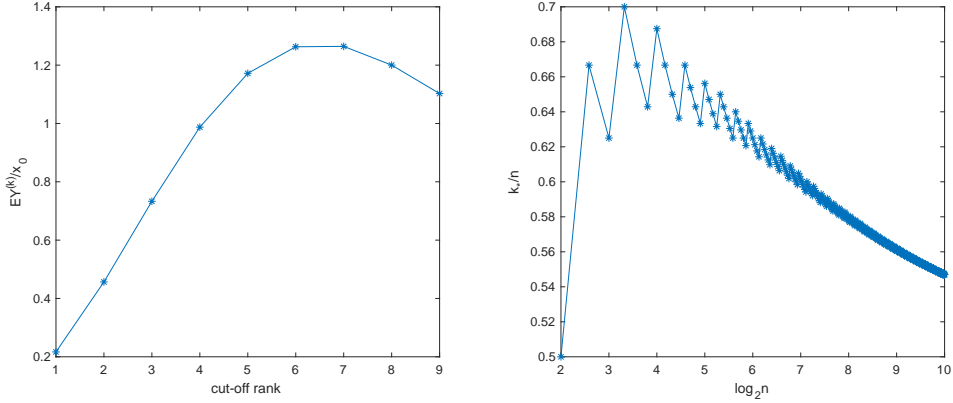}
\caption{Illustration of Proposition~\ref{pr:zeroDriftCutoff}. The left panel shows the $k$-th rank performance
for all cut-off schemes when $n=10$ and $k=5$; the best performance is attained at $k_*=7$. The right panel shows the optimal cut-off ratio $k_*(n)/n$ when the target rank $k$ varies with~$n$, chosen such that $k/n=\alpha:=1/2$ is constant. The behavior for finite~$n$ is rather complex but suggests a simplification in the limit $n\to\infty$, which has motivated the study of the limiting mean field game in a companion paper~\cite{NutzZhang.21b}.}
\label{fig:kstar}
\end{figure}

\begin{proof}
We have 
$
  \frac{\phi(k,l+1)}{\phi(k,l)} = \frac{(k+l-1)(n-l)}{(2n-k-l) l}.
$
Noting that
$$ 
  (k+l-1)(n-l) - (2n-k-l) l = n(k-l)+ l -n
$$
is $<0$ if $l\geq k$ and $>0$ if $l< k$, we see that 
$\frac{\phi(k,l+1)}{\phi(k,l)}<1$ if $l\geq k$ and $\frac{\phi(k,l+1)}{\phi(k,l)}>1$ if $l< k$. That is, we have
$$
  \phi(k,1)<\phi(k,2)<\cdots <\phi(k,k-1)< \phi(k,k) > \phi(k,k+1) > \cdots \phi(k,n-1)
$$
and in particular $\phi(k,k)$ is a maximum. In view of~\eqref{eq:kthOrderStats}, we conclude that an optimal reward scheme must pay equal rewards to ranks $j=1,\dots,k_{*}$. 
For $k=1$ it follows directly that $k_{*}=1$. For $k=2$ we note that $\phi(2,1)>\phi(2,3)$ holds for all $n$, which of course implies that $\frac12 [\phi(2,1)+\phi(2,2)]>\phi(2,3)$. The further examples are verified by direct calculation.
\end{proof}


We now turn to the case of non-zero drift, where our result is less detailed. The number $k_{*}$ is defined in~\eqref{eq:kstar}.

\begin{proposition}\label{pr:rank-k-nonzero-drift}
  If $\mu>0$, the expected performance $E[Y^{(k)}]$ is maximized by a cut-off at $j$ for some $j\le k_\ast$.
If $\mu<0$, the optimal reward scheme pays equal amounts to ranks 1 through $k_{*}$.
\end{proposition}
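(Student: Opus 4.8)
The plan is to treat $E[Y^{(k)}]$ as a function of the reward vector $R$ ranging over the normalized simplex $\Delta = \{R : R_1 \geq \cdots \geq R_{n-1}\geq R_n=0,\ \sum_i R_i = 1\}$ and optimize using the structural information from Lemma~\ref{le:kthOrderStats}. The key input is that $R \mapsto E[Y^{(k)}]$ is strictly convex when $\mu>0$ and strictly concave when $\mu<0$; I would exploit convexity/concavity differently in each case, so I treat the two regimes separately.

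For the case $\mu>0$, the idea is that a strictly convex functional on the convex polytope $\Delta$ attains its maximum at an extreme point. The extreme points of $\Delta$ (with the monotonicity and normalization constraints) are precisely the cut-off schemes $R^{j}$, $1\leq j\leq n-1$: any monotone, nonnegative, normalized vector is a convex combination of these, and the $R^{j}$ cannot themselves be written as nontrivial convex combinations of others in $\Delta$. Hence the maximizer must be some cut-off $R^{j}$. The remaining work is to show $j \le k_*$. Here I would compare adjacent cut-offs $R^{j}$ and $R^{j+1}$ and argue that moving the cut-off point beyond $k_*$ strictly decreases $E[Y^{(k)}]$. The natural tool is a one-parameter deformation $\lambda \mapsto E[Y^{(k)}_\lambda]$ along the segment from $R^{j+1}$ to $R^{j}$ and an examination of the sign of the derivative, paralleling the $\varphi'(0+)$ computation in the proof of Theorem~\ref{th:1st-rank}. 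Using the integral representation in Lemma~\ref{le:kthOrderStats}(ii), the derivative involves $\int_0^1 (\tilde g - g)(y)\,\frac{y^{n-k}(1-y)^{k-1}}{nBg(y)+1}\,dy$, and I would combine a single-crossing property of $\tilde g - g$ with monotonicity of the remaining factor to fix the sign, the threshold being governed exactly by the quantity defining $k_*$ in~\eqref{eq:kstar}.

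For the case $\mu<0$, concavity means the maximizer need not be an extreme point, so instead I would characterize the optimizer via first-order (Karush--Kuhn--Tucker) conditions on $\Delta$. The claim to establish is that the optimal $R$ has $R_1 = \cdots = R_{k_*} = 1/k_*$ and $R_i = 0$ for $i>k_*$; equivalently, the optimizer is supported on the first $k_*$ ranks with equal weights. I would compute the partial derivatives $\partial E[Y^{(k)}]/\partial R_l$ from the representation in Lemma~\ref{le:kthOrderStats}(ii), obtaining (up to positive constants) weights of the form $\int_0^1 \frac{nB\,P_l(y)}{nBg(y)+1} y^{n-k}(1-y)^{k-1}\,dy$, where $P_l(y) = \binom{n-1}{l-1}y^{n-l}(1-y)^{l-1}$. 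The ordering of these marginal values across $l$, together with the concave KKT stationarity, should force equality of rewards on the active ranks and identify the active set as $\{1,\dots,k_*\}$; the $\phi(k,l)$ ordering from Proposition~\ref{pr:zeroDriftCutoff} ($\phi(k,l)$ increasing for $l<k$, then decreasing, with the $k_*$-threshold) reappears in the $\mu=0$ limit and guides which ranks remain active.

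The main obstacle I anticipate is pinning down the precise threshold $j\le k_*$ in the $\mu>0$ case and the active-set identification in the $\mu<0$ case, because $k_*$ is defined through the averaged condition $\phi(k,j)\geq \frac{1}{j-1}\sum_{l<j}\phi(k,l)$ in~\eqref{eq:kstar}, which is a $\mu=0$ (linear) quantity, whereas here the relevant weights are the nonlinear $R$-dependent integrals above. Bridging the two requires controlling how the weight $1/(nBg(y)+1)$ distorts the comparison; the saving grace is that this factor is monotone in $y$ and positive, so single-crossing arguments transfer, but verifying that the distortion does not move the cut-off past $k_*$ (and that for $\mu<0$ it keeps exactly the first $k_*$ ranks active) is the delicate step and will likely need the explicit concavity/convexity plus a careful sign analysis rather than a soft argument.
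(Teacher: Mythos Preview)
Your $\mu>0$ outline starts correctly and matches the paper: strict convexity of $R\mapsto E[Y^{(k)}]$ together with the fact that the cut-offs are the extreme points of $\Delta$ forces the maximizer to be some $R^{j}$, after which one must rule out $j>k_*$. However, your proposed tool for that last step---examining $\varphi'(0+)$ along the segment between two cut-offs, as in the proof of Theorem~\ref{th:1st-rank}(ii)---does not transfer. In that proof $\varphi$ was \emph{concave}, so $\varphi'(0+)\le 0$ pinned the maximum at $\lambda=0$. Here, for $\mu>0$, the functional is \emph{convex} along every segment, and the sign of the derivative at one endpoint says nothing about which endpoint is larger. The paper instead compares $J(e_{k_*})$ and $J(e_j)$ directly using the elementary inequality $\log t\le t-1$, which linearizes the comparison and reduces it to the $\mu=0$ quantity $\tfrac{1}{k_*}\sum_{l\le k_*}\phi(k,l)-\tfrac{1}{j}\sum_{l\le j}\phi(k,l)$, nonnegative by the very definition of~$k_*$.

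Your $\mu<0$ plan contains a genuine misreading of the statement. The proposition asserts only that the optimal scheme pays \emph{equal} amounts to ranks $1,\dots,k_*$; it does \emph{not} say the remaining ranks receive zero, and in general they do not. The remark immediately following the proposition gives a counterexample with $(n,k)=(5,2)$, $k_*=2$, where the optimizer is approximately $(0.416,0.416,0.168,0,0)$---not a cut-off. Hence your target ``$R_1=\cdots=R_{k_*}=1/k_*$ and $R_i=0$ for $i>k_*$'' is false, and any KKT argument that tries to identify the active set as exactly $\{1,\dots,k_*\}$ must break down. What the paper actually proves is that the optimizer lies in the face spanned by $R^{k_*},\dots,R^{n-1}$: by strict concavity, whenever $\lambda_{i_0}>0$ for some $i_0<k_*$, transferring that weight to coordinate $k_*$ strictly increases $J$ (and here a one-sided derivative \emph{does} suffice, precisely because of concavity). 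Any convex combination of $R^{k_*},\dots,R^{n-1}$ automatically has $R_1=\cdots=R_{k_*}$, which is all that is claimed.
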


\begin{remark}
(a) For $\mu<0$, the optimizer need \emph{not} be a cut-off scheme. That is, in addition to the equal amounts mentioned in the proposition, smaller amounts may be paid to lower ranks. 
For instance, let $\mu=-0.5$, $\sigma=1$, $x_0=1$ and $(n,k)=(5,2)$. Then $k_*=2$ and numerical experiments show that the optimal reward scheme is given by $(0.416, 0.416, 0.168, 0, 0)$, which is not a cut-off scheme.

(b) For $\mu>0$, we conjecture that the optimal $j$ satisfies $k\leq j\leq k_{*}$. Both inequalities may be strict.
As an example, let $\mu=0.05$, $\sigma=1$, $x_0=1$ and $(n,k)=(10,5)$. In this case, $\mu<\bar\mu$ is satisfied for all rewards. We have $k_*=7$ and numerical experiments show that the cut-off at $j=6$ is optimal.
\end{remark}

\begin{proof}[Proof of Proposition~\ref{pr:rank-k-nonzero-drift}]
The cut-off schemes $(R^{i})_{i=1,\dots,n-1}$ are the extreme points of the compact, convex set of normalized reward schemes. Any normalized reward
$R$ can be uniquely expressed as a convex combination $R=\sum_{i=1}^{n-1}\lambda_i R^{i}$ where $\lambda=(\lambda_{1},\dots,\lambda_{n-1})$ is an element of the unit simplex $\Delta\subset \R^{n-1}$. Introducing the function $g^{i}$ associated with $R^{i}$ as in Lemma~\ref{le:solFexists},
\[g^i(y):=\sum_{l=1}^n R^i_l{{n-1}\choose{l-1}}y^{n-l}(1-y)^{l-1}=\sum_{l=1}^i \frac{1}{i}{{n-1}\choose{l-1}}y^{n-l}(1-y)^{l-1},
\]
we can rewrite the optimization over normalized reward schemes as 
\[\sup_{\lambda\in \Delta} \, n {{n-1}\choose{k-1}}  A^{-1} \int_{0}^{1}\log\left[nB \sum_{i=1}^{n-1}\lambda_i g^i(y) +1\right] y^{n-k}(1-y)^{k-1} dy.\]
Dropping a positive factor for brevity, we thus seek to maximize
\begin{equation}\label{eq:Jlam}
J(\lambda):= A^{-1} \int_{0}^{1}\log\left[nB \sum_{i=1}^{n-1}\lambda_i g^i(y) +1\right] y^{n-k}(1-y)^{k-1} dy
\end{equation}
over $\Delta$. This is a strictly convex, continuous function for $\mu>0$, showing that any optimizer must be an extreme point. Whereas for $\mu<0$, $J$ is strictly concave, showing that the optimizer is unique (and explaining why the solution may well be an interior point rather than a cut-off scheme).

(i) Let $\mu>0$, so that $A, B<0$. Fix $k_\ast<j<n$,  then $g^{k_\ast}$ is strictly single-crossing wrt.\ $g^j$ with some crossing point $y_0\in (0,1)$. Writing  $e_{i}$ for the $i$-th basis vector in $\R^{n-1}$, and using also that $x\le e^x-1$, with equality only for $x=1$, the crossing property implies 
\begin{align*}
J(e_{k_\ast})-J(e_j)&=A^{-1}\int_0^1 \log\left(\frac{nB g^{k_\ast}(y)+1}{nB g^j(y)+1}\right) y^{n-k}(1-y)^{k-1}dy\\
&> A^{-1}\int_0^1 \left(\frac{nB g^{k_\ast}(y)+1}{nB g^j(y)+1}-1\right) y^{n-k}(1-y)^{k-1}dy\\
&= \frac{nB}{A}\int_0^1 \frac{g^{k_\ast}(y)-g^j(y)}{nB g^j(y)+1} y^{n-k}(1-y)^{k-1}dy\\
& \ge  \frac{nB}{A(nB g^j(y_0)+1)}\int_0^1 (g^{k_\ast}(y)-g^j(y)) y^{n-k}(1-y)^{k-1}dy.
\end{align*}
Moreover,
\begin{align*}
&\int_0^1 (g^{k_\ast}(y)-g^j(y)) y^{n-k}(1-y)^{k-1}dy\\
&= \sum_{l=1}^{n}\left(\frac{1_{l\le k_\ast}}{k_\ast}-\frac{1_{l\le j}}{j} \right){{n-1}\choose{l-1}}\int_0^1 y^{2n-k-l}(1-y)^{k+l-2}dy\\
&= \sum_{l=1}^{n}\left(\frac{1_{l\le k_\ast}}{k_\ast}-\frac{1_{l\le j}}{j} \right){{n-1}\choose{l-1}}\frac{(2n-k-l)! (k+l-2)!}{(2n-1)!}\\
&= \frac{(n-1)!}{(2n-1)!}\sum_{l=1}^{n}\left(\frac{1_{l\le k_\ast}}{k_\ast}-\frac{1_{l\le j}}{j} \right)\phi(k,l)\\
&= \frac{(n-1)!}{(2n-1)!}\left(\frac{1}{k_\ast} \sum_{l=1}^{k_\ast} \phi(k,l)-\frac{1}{j} \sum_{l=1}^{j} \phi(k,l)\right).
\end{align*}
The last expression is nonnegative by the definition of~$k_\ast$. Putting everything together, we have shown that $e_j$ is strictly suboptimal and the claim follows.

(ii) Let $\mu<0$, so that $A, B>0$. Let $\lambda^0\in\Delta$ be such that $\lambda^0_{i_0}>0$ for some $i_0<k_\ast$ and define $\lambda^1:=\lambda^0+\lambda^0_{i_0}(e_{k_\ast}-e_{i_0})$. 
To show that $\lambda^1$ is strictly better than $\lambda^0$, it suffices by concavity to show
$\frac{\partial}{\partial \theta} J(\lambda^\theta)\big|_{\theta=0}< 0,$
where $\lambda^\theta:=\theta\lambda^0+(1-\theta) \lambda^1$.
Indeed, we have
\begin{align*}
\frac{\partial}{\partial \theta} J(\lambda^\theta)\big|_{\theta=0}
&=\frac{nB}{A}\int_0^1 \frac{\sum_{i=1}^{n-1}(\lambda^0_i-\lambda^1_i)g^i(y)}{ nB\sum_{i=1}^{n-1}\lambda^1_i g^i(y) +1}  y^{n-k}(1-y)^{k-1} dy.
\end{align*}
Using the single-crossing property of 
$g^{i_0}$ with respect to $g^{k_\ast}$, the strict monotonicity of $nB\sum_{i=1}^{n-1}\lambda^1_i g^i(y) +1$, and the definition of $k_\ast$, we deduce that

\begin{align*}
\frac{\partial}{\partial \theta}  J(\lambda^\theta)\big|_{\theta=0} &
=\frac{nB}{A}\int_0^1 \frac{\lambda^0_{i_0}(g^{i_0}(y)-g^{k_\ast}(y))}{ nB\sum_{i=1}^{n-1}\lambda^1_i g^i(y) +1}  y^{n-k}(1-y)^{k-1} dy\\
&<C\int_0^1 \lambda^0_{i_0}(g^{i_0}(y)-g^{k_\ast}(y))  y^{n-k}(1-y)^{k-1} dy\\
&=C \int_0^1 \lambda^0_{i_0} \sum_{l=1}^{n}\left( \frac{1_{l\le i_0}}{i_0}-\frac{1_{l\le k_\ast}}{k_\ast}  \right){{n-1}\choose{l-1}}y^{2n-k-l}(1-y)^{k+l-2}dy\\
&=C\lambda^0_{i_0}  \sum_{l=1}^{n}\left( \frac{1_{l\le i_0}}{i_0}-\frac{1_{l\le k_\ast}}{k_\ast}  \right){{n-1}\choose{l-1}}\frac{(2n-k-l)! (k+l-2)!}{(2n-1)!}\\
&=C\left(\frac{1}{i_0} \sum_{l=1}^{i_0} \phi(k,l)-\frac{1}{k_\ast} \sum_{l=1}^{k_\ast} \phi(k,l)\right)\le 0,
\end{align*}
where $C$ is a positive constant that may vary from line to line. This shows that $J(\lambda^1)>J(\lambda^0)$. As a consequence, the optimal reward scheme must be a convex combination of $R^{k_{*}},\dots,R^{n-1}$.
\end{proof}

\bibliography{stochfin}

\newcommand{\dummy}[1]{}
\begin{thebibliography}{10}

\bibitem{ArnoldSarabia.18}
B.~C. Arnold and J.~M. Sarabia.
\newblock {\em Majorization and the {L}orenz order with applications in applied
  mathematics and economics}.
\newblock Statistics for Social and Behavioral Sciences. Springer, Cham, 2018.

\bibitem{BayCviZhang.19}
E.~Bayraktar, J.~Cvitani\'{c}, and Y.~Zhang.
\newblock Large tournament games.
\newblock {\em Ann. Appl. Probab.}, 29(6):3695--3744, 2019.

\bibitem{BayZhang.19}
E.~Bayraktar and Y.~Zhang.
\newblock {Terminal ranking games}.
\newblock {\em To appear in Math. Oper. Res.}, 2019.

\bibitem{DiamondStiglitz.74}
P.~A. Diamond and J.~E. Stiglitz.
\newblock Increases in risk and in risk aversion.
\newblock {\em J. Econ. Theory}, 8(3):337--360, 1974.

\bibitem{FangNoe.16}
D.~Fang and T.~Noe.
\newblock Skewing the odds: Taking risks for rank-based rewards.
\newblock {\em Preprint SSRN:2747496}, 2016.

\bibitem{FangNoeStrack.20}
D.~Fang, T.~Noe, and P.~Strack.
\newblock Turning up the heat: The discouraging effect of competition in
  contests.
\newblock {\em J. Political Econ.}, 128(5):1940--1975, 2020.

\bibitem{FengHobson.15}
H.~Feng and D.~Hobson.
\newblock Gambling in contests modelled with diffusions.
\newblock {\em Decis. Econ. Finance}, 38(1):21--37, 2015.

\bibitem{FengHobson.16a}
H.~Feng and D.~Hobson.
\newblock Gambling in contests with random initial law.
\newblock {\em Ann. Appl. Probab.}, 26(1):186--215, 2016.

\bibitem{FengHobson.16b}
H.~Feng and D.~Hobson.
\newblock Gambling in contests with regret.
\newblock {\em Math. Finance}, 26(3):674--695, 2016.

\bibitem{Hall.69}
W.~J. Hall.
\newblock Embedding submartingales in {W}iener processes with drift, with
  applications to sequential analysis.
\newblock {\em J. Appl. Probability}, 6:612--632, 1969.

\bibitem{KempfEtAl.09}
A.~Kempf, S.~Ruenzi, and T.~Thiele.
\newblock Employment risk, compensation incentives, and managerial risk taking:
  Evidence from the mutual fund industry.
\newblock {\em J. Financ. Econ.}, 92(1):92 -- 108, 2009.

\bibitem{MarshallOlkin.11}
A.~W. Marshall, I.~Olkin, and B.~C. Arnold.
\newblock {\em Inequalities: theory of majorization and its applications}.
\newblock Springer Series in Statistics. Springer, New York, second edition,
  2011.

\bibitem{NutzZhang.19}
M.~Nutz and Y.~Zhang.
\newblock A mean field competition.
\newblock {\em Math. Oper. Res.}, 44(4):1245--1263, 2019.

\bibitem{NutzZhang.21b}
M.~Nutz and Y.~Zhang.
\newblock Mean field contest with singularity.
\newblock {\em Preprint arXiv:2103.04219v1}, 2021.

\bibitem{Obloj.04}
J.~Ob{\l}{\'o}j.
\newblock The {S}korokhod embedding problem and its offspring.
\newblock {\em Probab. Surv.}, 1:321--390, 2004.

\bibitem{Seel.15}
C.~Seel.
\newblock Gambling in contests with heterogeneous loss constraints.
\newblock {\em Economics Letters}, 136:154 -- 157, 2015.

\bibitem{SeelStrack.13}
C.~Seel and P.~Strack.
\newblock Gambling in contests.
\newblock {\em J. Econ. Theory}, 148(5):2033--2048, 2013.

\bibitem{SeelStrack.16}
C.~Seel and P.~Strack.
\newblock Continuous time contests with private information.
\newblock {\em Math. Oper. Res.}, 41(3):1093--1107, 2016.

\bibitem{Vojnovic.2016}
M.~Vojnovi\'{c}.
\newblock {\em Contest Theory: Incentive Mechanisms and Ranking Methods}.
\newblock Cambridge University Press, 2016.

\end{thebibliography}
\bibliographystyle{plain}
\end{document}